\documentclass[12pt]{amsart}
   \usepackage{amsmath,amsthm}
   \usepackage{amsfonts}   % if you want the fonts
   \usepackage{amssymb}    % if you want extra symbols
   \usepackage{url}
   \usepackage{pstricks}
      \usepackage{pstricks,graphicx}
\usepackage{comment}
\topmargin 0pt
\advance \topmargin by -\headheight
\advance \topmargin by -\headsep
      
\textheight 8.9in
     
\oddsidemargin 0pt
\evensidemargin \oddsidemargin
\marginparwidth 0.5in
     
\textwidth 6.5in

\usepackage[all]{xy}

\newtheorem{thm}{Theorem}[section]
\newtheorem{prop}[thm]{Proposition}
\newtheorem{lem}[thm]{Lemma}

\newtheorem{defn}[thm]{Definition}

\theoremstyle{remark}

\title{Asymptotic properties of MMM-classes}
\author{Jonathan Bowden}
%\address{Mathematisches Institut, Universit\"at Augsburg, Universit\"atsstr. 14, 86159 Augsburg, Germany}
\address{Mathematisches Institut, Ludwig-Maximillians-Universit\"at, Theresienstr. 39, 80333 Munich, Germany}
\email{jonathan.bowden@math.lmu.de}
\date{\today}

\subjclass[2010]{55R22; 43A07, 57R19, 20F69}
\keywords{Mapping class groups, Bounded cohomology, Simplicial volume, Characteristic classes}

\begin{document}

\maketitle

\begin{abstract}
We study geometric properties of characteristic classes of surfaces bundles. In particular, we show that oriented surface bundles over bases with amenable fundamental groups and dimension at least 2 have trivial simplicial volume. We show furthermore that all MMM-classes are hyperbolic in the sense of Gromov, verifying a weakened version of a conjecture due to Morita. Finally we consider surface bundles over products and restrictions on their characteristic classes
\end{abstract}

\section{Introduction}
In this article we discuss several properties of characteristic classes of surface bundles. Since the results of Madsen-Weiss \cite{MW} it is known that the set of stable rational characteristic classes of oriented surface bundles consists of so-called tautological or Mumford-Miller-Morita (MMM) classes $e_k$ and their products (cf.\ Definition \ref{def_MMM}). It therefore remains to investigate the properties of these classes more closely and we do this by looking at the asymptotic properties of these classes as well as their behaviour for certain classes of base manifolds.

It has been conjectured by Morita that all the MMM-classes are bounded in the sense of Gromov. It is known that the odd classes are bounded \cite{{Mor3}} so it remains to deal with the even classes. As a partial result in this direction Morita showed that all the rational MMM-classes vanish on \emph{amenable} subgroups (cf.\ \cite{Mor3}). In this article we will show that the MMM-classes are hyperbolic in the sense of Gromov (Theorem \ref{MMM_are_hyperbolic}), a concept that is strictly weaker than boundedness in general, but which implies for example vanishing on amenable groups. We also give an alternate proof of the fact that the rational MMM-classes vanish on amenable groups, which is based on reduction systems for subgroups of the mapping class groups and showing that the total space of a surface bundle over a base with amenable fundamental group has vanishing simplicial volume, unless the base is the circle:
\begin{thm}
Let $\Sigma \longrightarrow E \longrightarrow B$ be an oriented surface bundle over a closed, oriented manifold of dimension $dim(B) \geq 2$. If $\pi_1(B)$ is amenable, then the simplicial volume $||E||$ vanishes.
\end{thm}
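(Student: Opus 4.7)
The plan is to split on the genus $g$ of the fibre $\Sigma$. For $g = 0$ the orientation-preserving mapping class group of $S^2$ is trivial, so $\pi_1(E)\cong\pi_1(B)$ is amenable and Gromov's classical theorem gives $\|E\|=0$. For $g = 1$ the group $\pi_1(E)$ fits into the extension $1\to\Z^2\to\pi_1(E)\to\pi_1(B)\to 1$ of amenable groups, hence is itself amenable and again $\|E\|=0$. From now on assume $g\geq 2$.

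The monodromy representation $\rho\co\pi_1(B)\to\mathrm{MCG}(\Sigma)$ has amenable image, which by the Tits alternative of Ivanov--McCarthy combined with the theorem of Birman--Lubotzky--McCarthy is virtually abelian. After passing to a finite cover of $B$ (which changes $\|E\|$ only by a positive multiplicative factor and so preserves its vanishing), the image may be taken free abelian, and using the canonical reduction system associated to an abelian subgroup of $\mathrm{MCG}(\Sigma)$, together with a further finite cover, we may arrange that every curve of a reduction system $\mathcal{C}$ and every component $\Sigma_j$ of $\Sigma\setminus\mathcal{C}$ is fixed individually, with the restricted monodromy on each $\Sigma_j$ either trivial or generated by a single pseudo-Anosov.

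The curves of $\mathcal{C}$ sweep out incompressible codimension-two circle bundles $T_i\subset E$ over $B$; each $T_i$ has fundamental group an extension of the amenable group $\pi_1(B)$ by $\Z$ and is therefore amenable. Gromov's additivity of simplicial volume under gluings along $\pi_1$-amenable submanifolds then yields $\|E\|=\sum_j\|E_j,\partial E_j\|$, where $E_j\to B$ denotes the sub-bundle with fibre $\Sigma_j$, so it suffices to prove each relative simplicial volume vanishes. On pieces with trivial restricted monodromy, $E_j=\Sigma_j\times B$, and the product inequality together with $\|B\|=0$ (Gromov, since $\pi_1(B)$ is amenable and $\dim B\geq 1$) gives $\|E_j,\partial E_j\|\leq C\|\Sigma_j,\partial\Sigma_j\|\cdot\|B\|=0$.

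The pieces with pseudo-Anosov monodromy are the main obstacle, and this is the only point where the hypothesis $\dim B\geq 2$ is used essentially: when $B=S^1$ the pseudo-Anosov mapping torus is a closed hyperbolic $3$-manifold of positive simplicial volume. On such a piece the restricted monodromy factors as $\pi_1(B)\twoheadrightarrow\Z\xrightarrow{\phi}\mathrm{MCG}(\Sigma_j)$, so $E_j$ is the pullback of the mapping torus $T_\phi$ along a map $f\co B\to S^1$. The plan is to exploit the positive-dimensional fibres of $f$---available precisely because $\dim B\geq 2$---to construct an amenable open cover of $E_j$ of multiplicity at most $\dim E_j$, by pulling back a cover of $S^1$ by two arcs (on each of which the restricted bundle trivialises, so that $\pi_1$-images lie in $\ker(f_*)$ and are thus amenable in $\pi_1(B)$) and combining with a suitable cover of $\Sigma_j$ by amenable subsurfaces subordinate to the reduction structure; Gromov's vanishing theorem for amenable open covers will then complete the argument.
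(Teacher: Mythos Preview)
Your overall strategy---pass to a finite cover so the holonomy is abelian, cut $E$ along the circle sub-bundles coming from a maximal reduction system, and show each piece has vanishing (relative) simplicial volume---is the same as the paper's. The difference lies in the order of the reductions, and this is exactly where your argument has a gap.

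First a small correction: the circle bundles $T_i$ swept out by the reduction curves are codimension \emph{one} in $E$ (they have dimension $\dim B+1$), not codimension two; this is what Gromov's Cutting-off/additivity theorem requires, so the slip is harmless.

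The real issue is your last paragraph. For the pieces $E_j$ with pseudo-Anosov restricted monodromy you only outline a plan: pull back a two-arc cover of $S^1$ via $f\co B\to S^1$, combine with ``a suitable cover of $\Sigma_j$ by amenable subsurfaces,'' and invoke the amenable-cover vanishing theorem. But over each arc the bundle is $\Sigma_j\times V$ with $\pi_1(\Sigma_j)$ non-amenable, so a further refinement in the fibre direction is mandatory; you give no construction and no multiplicity bound. Moreover you need the \emph{relative} version $\|E_j,\partial E_j\|=0$, and the amenable-cover vanishing theorem with boundary needs additional hypotheses that you do not address. As written, the hardest case is left as an unexecuted plan.

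The paper avoids all of this by reversing the two reductions. Before cutting, it applies Gromov's Mapping Theorem to the \emph{closed} manifold $E$: the abelian holonomy means $E$ is pulled back from a bundle $E'\to T^N$, and since $\ker(\pi_1(E)\to\pi_1(E'))$ is amenable, $\|[E]\|_1=\|\bar f_*[E]\|_1$. A transfer argument together with the fact that $H_*(T^N)$ is generated by subtori reduces to the case $B=T^n$, $n\ge2$. Only then does one cut along the reduction system; each piece is now $M_i\times T^{n-1}$ with $M_i$ a mapping torus over $S^1$. The $T^{n-1}$ factor---available precisely because $n\ge2$---furnishes proper self-maps of arbitrary degree, so $\|M_i\times T^{n-1}\|\in\{0,\infty\}$, and finiteness of $\|E\|$ forces each to be zero. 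No amenable-cover construction or relative product inequality is needed.
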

Note that the assumption that the base has dimension at least $2$ is essential, since there are mapping tori over $S^1$ which are hyperbolic manifolds and thus have non-trivial simplicial volume.

In a similar direction using reduction systems we show how the MMM-classes can evaluate on non-trivial products:
\begin{thm}
Let $\Sigma \longrightarrow E \longrightarrow B$ be an oriented surface bundle over a base $B = M_1 \times M_2$ that is a non-trivial product. If $m = \text{max} \ \{ dim(M_1) , dim(M_2) \}$, then $e_k(E) = 0$ for all $k > \frac{m}{2}$.
\end{thm}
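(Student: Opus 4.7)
The strategy is to exploit the product structure of the base via a common reduction system for the monodromy, closely following the method already established for the previous theorem. Assume $g \geq 2$ (so that $B\mathrm{Diff}^+(\Sigma_g) \simeq K(\Gamma_g,1)$) and, without loss of generality, $m_1 := \dim M_1 \leq m_2 := \dim M_2 = m$. The bundle corresponds to a homomorphism $\rho \colon \pi_1(M_1) \times \pi_1(M_2) \to \Gamma_g$ with commuting images $G_i := \rho(\pi_1(M_i))$, the commutativity being forced by the product structure of the base.

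By the centralizer theory of the mapping class group (Birman--Lubotzky--McCarthy together with Nielsen--Thurston), the commuting pair $(G_1, G_2)$ admits a common invariant system of disjoint essential simple closed curves $\mathcal{C} \subset \Sigma$, with the further property that on each component $\Sigma^{(j)}$ of $\Sigma \setminus \mathcal{C}$ at most one of $G_1|_{\Sigma^{(j)}}, G_2|_{\Sigma^{(j)}}$ can contain a pseudo-Anosov, since the centralizer of a pseudo-Anosov is virtually cyclic. After replacing $B$ by a finite cover, which changes rational MMM-classes only by a nonzero scalar, the system $\mathcal{C}$ is pointwise invariant and sweeps out a disjoint union of circle bundles inside $E$; cutting $E$ along them decomposes it as a union of subsurface bundles $E^{(j)} \to B$ with fibers the pieces $\Sigma^{(j)}$.

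A case analysis on each piece then shows that the classifying map of $E^{(j)}$ factors through a target of effective cohomological dimension at most $m$: if only $G_i|_{\Sigma^{(j)}}$ is non-trivial, $E^{(j)}$ is pulled back from a bundle over $M_i$ of dimension $\leq m$; if one action contains a pseudo-Anosov, the other is forced into a virtually cyclic quotient, so that the classifying map factors through $M_i \times S^1$, and the single borderline degree $2k = m+1$ that could otherwise survive (in the case $m_1 = m_2$ odd) is killed by Morita's vanishing of MMM-classes on amenable subgroups applied to the $S^1$-factor; if both act reducibly on $\Sigma^{(j)}$, one iterates with a finer reduction, terminating by induction on the complexity $3g^{(j)} - 3 + n^{(j)}$ of $\Sigma^{(j)}$. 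Finally, the fiber integration defining $e_k$ is additive under the decomposition $\Sigma = \bigcup \Sigma^{(j)}$, so $e_k(E) = \sum_j \pi^{(j)}_!\bigl(e^{k+1}|_{E^{(j)}}\bigr)$, and each summand vanishes for $k > m/2$ by the per-piece analysis.

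The main obstacle is Step~2: establishing the common reduction system and the pseudo-Anosov dichotomy in sufficient generality for \emph{subgroups} of $\Gamma_g$ commuting with one another, rather than for commuting \emph{pairs of elements}, where the statement is classical. A secondary technical point is verifying the additivity of fiber integration under fiberwise cutting along a monodromy-invariant multicurve, where the contributions from the boundary circle bundles must be shown to cancel; the rest is routine dimension counting together with the amenable vanishing already established by Morita.
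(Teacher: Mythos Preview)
Your approach is essentially the paper's: pass to a finite-index subgroup, take a reduction system for the holonomy image, analyse each piece via the pseudo-Anosov centraliser dichotomy, and use the additivity of $e_k$ under fibrewise cutting (the paper's Lemma~\ref{MMM_prod}) together with amenable vanishing (Theorem~\ref{amenable_MMM}). Your two ``technical worries'' are precisely what the paper handles: the reduction-system input is taken wholesale from Ivanov's theory applied to the \emph{full} image $\rho(G)$ (no need for a ``common'' system for $G_1,G_2$ separately), and the additivity of fibre integration is the content of Lemma~\ref{MMM_prod}.

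There is, however, a genuine gap in your case analysis. The assertion that on each piece ``at most one of $G_1|_{\Sigma^{(j)}},G_2|_{\Sigma^{(j)}}$ can contain a pseudo-Anosov'' is false: both can, provided they are powers of a single pseudo-Anosov. More seriously, in the case where $G_1$ contains a pseudo-Anosov on $\Sigma^{(j)}$ you only conclude that $G_2|_{\Sigma^{(j)}}$ is virtually cyclic and then factor through $M_1\times S^1$. This leaves the borderline degree $2k=m+1$ (when $m_1=m_2=m$ is odd) unresolved: ``amenable vanishing applied to the $S^1$-factor'' does not obviously kill a top-degree class on $M_1\times S^1$, since restricting $e_k$ to $\{pt\}\times S^1$ is vacuous for degree reasons. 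The fix---and the paper's actual argument---is to run the centraliser step \emph{symmetrically}: if $G_2|_{\Sigma^{(j)}}$ is nontrivial (hence generated by a pseudo-Anosov $\gamma$), then $G_1|_{\Sigma^{(j)}}$ centralises $\gamma$ and is therefore cyclic too, so the \emph{entire} image $\rho(G)|_{\Sigma^{(j)}}$ is cyclic. Now the piece factors through $S^1$ alone and amenable vanishing applies directly, with no borderline degree. With this correction, your iteration step becomes unnecessary as well: starting from a \emph{maximal} reduction system (Ivanov, Cor.~7.18) guarantees each piece is already irreducible or trivial, so the trichotomy ``factors through $M_1$ / factors through $M_2$ / cyclic'' is exhaustive from the outset.
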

\noindent In particular, this theorem then says that the MMM-classes are indecomposable with respect to products.
%\section*{Acknowledgements:}

\medskip

\noindent \textbf{Conventions:} All manifolds and bundles will be assumed to be oriented and smooth. %Unless otherwise stated all homology groups should be considered with real coefficients.

\medskip

\noindent \textbf{Acknowledgement:} We thank Prof. D. Kotschick for helpful conversations and for introducing us to the notion of hyperbolicity. 

\section{Surface bundles and characteristic classes}\label{surface_bundles}
We begin by recalling certain generalities about surface bundles, which for the most part can be found in \cite{Mor}. Let $$\Gamma_h = Diff^+(\Sigma_h)/ Diff_0(\Sigma_h)$$ denote the mapping class group of an oriented Riemann surface $\Sigma_h$ of genus $h$. By the classical result of Earle and Eells \cite{EE} the identity component $Diff_0(\Sigma_h)$ is contractible in the $C^{\infty}$-topology if $ h \geq 2$. Thus the classifying space $B Diff^+(\Sigma_h)$ is homotopy equivalent to $B \Gamma_h$ which is in turn the Eilenberg-MacLane space $K(\Gamma_h,1)$.

In general, any oriented surface bundle is determined up to bundle isomorphism by the homotopy class of its classifying map and since $B Diff^+(\Sigma_h)$ is aspherical, a surface bundle $\Sigma_h \to E \to B$ is determined up to bundle isomorphism by the conjugacy class of its \emph{holonomy representation}:
\[ \rho: \pi_1(B) \longrightarrow \Gamma_h.\]
The conjugation ambiguity is a result of the choice of base points.
Conversely, any homomorphism $\rho: \pi_1(B) \longrightarrow \Gamma_h$ induces a map $$B \to K(\Gamma_h,1) = B \Gamma_h$$ and thus defines a bundle that has holonomy $\rho$.

In the presence of a marked point we can define the group of isotopy classes of diffeomorphisms fixing a marked point, which we denote by $\Gamma_{h,1}$. Furthermore there is a natural exact sequence given by forgetting the marked point
\[1 \longrightarrow \pi_1(\Sigma_h) \longrightarrow \Gamma_{h,1} \longrightarrow \Gamma_h \longrightarrow 1\]
and one may identify $B \Gamma_{h,1}$ with the the total space $E\Gamma_h$ of the universal bundle over $B \Gamma_h$.

There is a tautological cohomology class 
$$e \in H^2(B \Gamma_{h,1},\mathbb{Z}) = H^2(E \Gamma_h,\mathbb{Z})$$
which is defined as the Euler class of the oriented rank-2 bundle of vectors tangent to the fibers of $E \Gamma_h \longrightarrow B\Gamma_h$. Alternately, one can define $e$ as the Euler class associated to the central extension
\[1 \longrightarrow \mathbb{Z} \longrightarrow \Gamma^1_h \to \Gamma_{h,1} \to 1\]
where $\Gamma^1_h = Diff^c(\Sigma^1_h)/ Diff^c_0(\Sigma^1_h)$ denotes the mapping class group of a once punctured, genus $h$ surface. Here the right most map is given by collapsing the boundary to a point and the kernel is generated by a Dehn twist along a curve parallel to the boundary.

A family of characteristic classes of oriented surface bundles can be defined using the vertical Euler class. These are the so-called Mumford-Miller-Morita (MMM)-classes. 
\begin{defn}\label{def_MMM}
The (universal) $k$-th MMM-class of a surface bundle is defined as $$e_k= \pi_{!} e^{k+1} \in H^{2k}(B\Gamma_h,\mathbb{Z}),$$
where $e$ is the vertical Euler class of the universal oriented surface bundle $E\Gamma_h \longrightarrow B\Gamma_h$ and $\pi_{!}$ denotes integration along the fiber.

The $k$-th MMM-class of a particular bundle $E \longrightarrow B$ is denoted \\$e_k(E)~\in~H^{2k}(B,\mathbb{Z})$.
\end{defn}
If the genus of the fibre is at least $2$, then we can consider the $k$-th MMM-class as an element in the group cohomology of the mapping class group itself $e_k \in H^{2k}(\Gamma_h,\mathbb{Z})$.
\subsection{Boundedness of the vertical Euler class}
In \cite{Gro} Gromov introduced the so-called $l^1$-norm on homology that is defined as follows.
\begin{defn}[$l^1$-norm]
Let $ c = \sum_i \lambda_i \sigma_i$ be a chain in $C_k(X, \mathbb{R})$. We define the $l^1$-norm of $c$ to be
\[ || c ||_1 =  \sum_i |\lambda_i|.\]
For a class $\alpha \in H_k(X, \mathbb{R})$ we define
\[ ||\alpha||_1 = \inf \ \{ \  || z||_1 \ | \ \alpha = [z] \}. \]
\end{defn}
If $X$ is an orientable, closed manifold, then the norm of the fundamental class is called the \emph{simplicial volume} and is denoted $||X||$. By considering the natural pairing between homology and cohomology, one obtains a norm on cohomology that is dual to the $l^1$-norm.
\begin{defn}[$l^{\infty}$-norm]
Let $ c$ be a cochain in $C^k(X, \mathbb{R})$. We define the $l^{\infty}$-norm of $ c$ to be
\[ ||c||_{\infty} =  \sup_{\sigma \in S_k(X)}\frac{|c(\sigma)|}{\thinspace ||\sigma||_1}.\]
For a class $\alpha \in H^k(X, \mathbb{R})$ we define
\[ ||\alpha||_{\infty} = \inf \ \{ \  || c||_\infty \ | \ \alpha = [c] \}. \]
\end{defn}
This definition of the $\ell^{\infty}$-norm on cohomology agrees with the $\ell^{\infty}$-norm on cohomology as introduced by Gromov \cite{Gro}. We then say that a cohomology class is \emph{bounded}, if it is bounded with respect to the $l^{\infty}$-norm.

A fundamental fact first observed by Morita is that the vertical Euler class is bounded. This follows from the fact that the vertical Euler class can be described as a pull-back of the Euler class in the group cohomology of $\text{Homeo}_+(S^1)$, which is bounded. This fact can also be seen as a consequence of the adjunction inequality as explained in \cite{Bow}.
\begin{prop}[Morita, \cite{Mor3}]\label{Euler_bounded}
Let $h \geq 2$, then the vertical Euler class $e$ is bounded when considered as a class in $H^2(\Gamma_{h,1},\mathbb{R})$.
\end{prop}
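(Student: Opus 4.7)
The strategy is to exhibit $e$ as the pullback of the classical universal Euler class $e^b \in H^2(\text{Homeo}_+(S^1),\mathbb{Z})$, which is well known to be bounded with $\ell^\infty$-norm $\tfrac12$ (Gromov, Wood). Since pullbacks of bounded classes are bounded, this will suffice.

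The first step is to construct a Nielsen-type boundary homomorphism
\[
\rho\co \Gamma_{h,1} \longrightarrow \text{Homeo}_+(S^1).
\]
Using $h\ge 2$, I would fix a hyperbolic structure on $\Sigma_h$ and identify its universal cover with $\mathbb{H}^2$, together with a lift $\tilde x_0$ of the marked point. Any representative $\phi\in\text{Diff}^+(\Sigma_h,x_0)$ admits a unique lift $\tilde\phi\co \mathbb{H}^2\to\mathbb{H}^2$ fixing $\tilde x_0$; as $\tilde\phi$ is a quasi-isometry it extends to a homeomorphism of the circle at infinity $\partial\mathbb{H}^2\cong S^1$. Uniqueness of the lift makes the assignment multiplicative. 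Moreover, if $\phi$ is isotopic to the identity rel $x_0$, then $\tilde\phi$ has bounded displacement from the identity---since the lifted isotopy covers a bounded isotopy in the compact surface---and therefore extends trivially on $\partial\mathbb{H}^2$. Hence $\rho$ descends to $\Gamma_{h,1}$.

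The second step is to identify the central extension $1\to\mathbb{Z}\to\Gamma_h^1\to\Gamma_{h,1}\to 1$ with the $\rho$-pullback of the universal-cover extension
\[
1\longrightarrow \mathbb{Z}\longrightarrow \widetilde{\text{Homeo}}_+(S^1) \longrightarrow \text{Homeo}_+(S^1) \longrightarrow 1.
\]
A compactly supported diffeomorphism of $\Sigma_h^1$ representing a class in $\Gamma_h^1$ canonically lifts to a quasi-isometry of $\mathbb{H}^2$ whose induced boundary circle map inherits a distinguished lift to $\mathbb{R}\to\mathbb{R}$, giving a homomorphism $\tilde\rho\co\Gamma_h^1\to\widetilde{\text{Homeo}}_+(S^1)$ covering $\rho$. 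Under this lift, the Dehn twist generator of the kernel on the left---performed near the puncture---acts as a full rotation on the relevant arc of $\partial\mathbb{H}^2$, mapping to the integer-translation generator of the kernel on the right. This identifies the two central extensions, so $\rho^* e^b = e$.

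Step three then follows immediately from functoriality of the $\ell^\infty$-seminorm under pullback. I expect the main obstacle to be the compatibility of central extensions in the second step: one must carefully track framing and basepoint data through the canonical lifts so as to verify that the boundary-parallel Dehn twist indeed corresponds to translation by exactly $1$ in $\widetilde{\text{Homeo}}_+(S^1)$, and not any other integer multiple.
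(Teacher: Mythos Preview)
Your proposal is correct and follows essentially the same route the paper indicates: the paper does not give a detailed proof but simply remarks that $e$ is the pullback of the Euler class in $H^2(\text{Homeo}_+(S^1))$, which is bounded, citing Morita \cite{Mor3}. Your three steps flesh out exactly this argument via the Nielsen boundary action and the identification of central extensions; the one point you flag as delicate (that the boundary-parallel Dehn twist maps to the generator of the $\mathbb{Z}$-kernel) is indeed the only nontrivial verification, but it is standard.
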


\section{MMM-classes vanish on amenable groups}
In this section we show that all MMM-classes vanish on amenable groups as rational classes. 
Let us first recall the definition of amenability.
\begin{defn}
A group $G$ is called amenable if it admits a left-invariant mean, i.e.\ there is a left-invariant map from the set of bounded functions
$$\mu :  L^{\infty}(G) \to \mathbb{R}$$
such that $\mu(1)=1$ and $\mu(f) \geq 0$ if $f \geq 0$.
\end{defn}
The main tool we shall use to understand bundles over spaces with amenable fundamental groups is the theory of reduction systems for subgroups of mapping class groups (see \cite{BLM}, \cite{Iva2}). It will be convenient to use slightly different notation from the previous section. Namely, we consider a compact, orientable surface $\Sigma$ with possibly non-empty boundary and define
\[MCG(\Sigma) = PDiff^+(\Sigma) / Diff_0(\Sigma).\]
Here $PDiff^+(\Sigma)$ is the group of \emph{pure} orientation preserving diffeomorphisms of $\Sigma$, i.e.\ those diffeomorphisms that \emph{do not} permute boundary components, and $Diff_0(\Sigma)$ denotes those diffeomorphisms that are isotopic to the identity where the isotopy need \emph{not} fix the boundary. We will also want to consider the group of diffeomorphisms that fix the boundary up to isotopy which we denote by
\[MCG(\Sigma, \partial \Sigma) = PDiff^+(\Sigma, \partial \Sigma) / Diff_0(\Sigma, \partial \Sigma).\]
A subgroup $G \subset MCG(\Sigma)$ is called \emph{reducible} if there exists a homotopically non-trivial, embedded 1-dimensional submanifold $C \subset \Sigma$ which is \emph{componentwise} fixed by every element in $G$ up to isotopy. If $\Sigma$ has boundary we require that no component of $C$ is isotopic into the boundary. Such a submanifold is called a \emph{reduction system} for $G$. If no such $C$ exists, then we say that $G$ is \emph{irreducible}.

Next we consider a reducible subgroup $G \subset MCG(\Sigma)$. This then gives a map $G \longrightarrow MCG(\Sigma \setminus C)$. %since the identity component of the group of diffeomorphisms $Diff_0(\Sigma,C)$ that preserve the components of $C$ is contractible, (see \cite{Iva}, \cite{Iva2}). 
We let $MCG(\Sigma, C)$ denote the subgroup of the mapping class group that fixes each component of $C$ up to isotopy. If we let $\{Q_i\}$ for $1\le i \le k$ denote the closures of the components of $\Sigma \setminus C$, then the holonomy map factors through the map $MCG(\Sigma, C) \longrightarrow \prod_i MCG(Q_i)$. An important fact is that after taking a finite index subgroup $G' \subset G$, there is always a so-called \emph{maximal} reduction system $C_{max}$ so that the image of $G'$ in each $MCG(Q_i)$ is irreducible or trivial (\cite{Iva2}, Cor.\ 7.18).

Moreover, any \emph{irreducible} subgroup either contains a free group on two generators or is virtually cyclic (\cite{Iva2}, Cor.\ 8.6 and Theorem 8.9). Thus if $G$ is amenable it contains no free groups on two generators and hence the image $H_i$  of $G'$ in $MCG(Q_i)$ is virtually cyclic. After taking finite index subgroups one may then assume that each $H_i$ is either infinite cyclic, or trivial. The analogous result for solvable groups is older and goes back to Birman-Lubotzky-McCarthy in \cite{BLM}.

We summarise this discussion in the following theorem.
\begin{thm}[\cite{Iva2}, Theorem 8.9]\label{Iva_amenable}
Let $\Sigma$ be any compact surface and let $G \subset MCG(\Sigma)$ be amenable. Then $G$ is virtually abelian. Moreover, there exists a finite index subgroup $G' \subset G$ and a reduction system $C$ so that the images of $G'$ in $MCG(Q_i)$ are infinite cyclic or trivial.
\end{thm}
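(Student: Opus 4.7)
The plan is to combine the two tools already introduced in the preceding discussion: the existence of a maximal reduction system after passing to finite index (Ivanov, Cor.\ 7.18), and the Tits-type alternative for irreducible subgroups of the mapping class group (Ivanov, Cor.\ 8.6 and Thm.\ 8.9).

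First I would replace $G$ by the finite-index subgroup $G' \subseteq G$ furnished by the maximal reduction system result, so that there is a reduction system $C \subset \Sigma$ for which the image $H_i$ of $G'$ in each $MCG(Q_i)$ is either irreducible or trivial, where the $Q_i$ are the closures of the components of $\Sigma \setminus C$. Since $G$ is amenable, so is the quotient $H_i$ of $G'$ for each $i$, and in particular $H_i$ contains no non-abelian free subgroup. Applying the Tits-type alternative to each irreducible $H_i$, one concludes that $H_i$ is virtually cyclic. After a further passage to a finite-index subgroup of $G'$ (and corresponding refinement on each factor) we may assume that each $H_i$ is either infinite cyclic or trivial. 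This already establishes the second assertion of the theorem concerning the reduction system.

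For the virtual abelianness of $G$, consider the kernel $K$ of the natural map $G' \to \prod_i H_i$. By construction, an element of $K$ acts trivially on each piece $Q_i$ up to isotopy while preserving every component of $C$; a standard fact about mapping classes of surfaces cut along multicurves then identifies $K$ with a subgroup of the free abelian group of rank $|C|$ generated by Dehn twists along the components of $C$. Hence $K$ is free abelian. Passing to yet a further finite-index subgroup one may arrange that $G'$ preserves orientations on every component of $C$, so that each generator of $G'$ commutes with each generator of $K$. This makes the extension
\[ 1 \longrightarrow K \longrightarrow G' \longrightarrow \prod_i H_i \longrightarrow 1 \]
central, with both kernel and quotient finitely generated abelian.

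The main technical obstacle is upgrading this central extension to an actually abelian group (central extensions of $\mathbb{Z}^k$ by $\mathbb{Z}^\ell$ can in general be $2$-step nilpotent, e.g.\ the Heisenberg group). One resolves this by exhibiting, after a final finite-index refinement, commuting lifts in $G'$ of a generating set of $\prod_i H_i$; such lifts exist because each factor $H_i$ is generated by a Nielsen--Thurston representative that can be chosen to be supported on $Q_i$, and representatives supported on disjoint subsurfaces commute in $MCG(\Sigma, C)$. The detailed verification is precisely what occupies Ivanov's proof of his Theorem~8.9, and we refer the reader there for the fine points.
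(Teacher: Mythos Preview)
Your proposal matches the paper's treatment: the paper does not give an independent proof of this theorem but rather cites Ivanov and summarises, in the paragraphs preceding the statement, exactly the two ingredients you invoke --- the maximal reduction system (Cor.~7.18) and the Tits-type alternative for irreducible subgroups (Cor.~8.6 and Thm.~8.9) --- to conclude that the images $H_i$ are virtually cyclic, hence infinite cyclic or trivial after a further finite-index pass. Your sketch of the virtual-abelianness via the central extension by the Dehn-twist subgroup along $C$ goes slightly beyond what the paper spells out, but it is the standard route (Birman--Lubotzky--McCarthy, Ivanov) and you rightly defer the delicate step of producing commuting lifts inside $G'$ to Ivanov's Theorem~8.9; note, though, that the lifts supported on disjoint $Q_i$ are a priori only elements of $MCG(\Sigma,C)$, not of $G'$, so the ``final finite-index refinement'' genuinely needs the argument in Ivanov rather than just the disjoint-support observation.
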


We are now able to state and prove the following theorem.
\begin{thm}\label{amenable_vanish}
Let $\Sigma \longrightarrow E \longrightarrow B$ be an oriented surface bundle over a closed, oriented manifold of dimension $dim(B) \geq 2$. If $\pi_1(B)$ is amenable, then the simplicial volume $||E||$ vanishes.
\end{thm}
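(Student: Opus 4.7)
Denote the holonomy by $\rho \co \pi_1(B) \to MCG(\Sigma)$. Its image is amenable, so Theorem \ref{Iva_amenable} supplies a finite-index subgroup $G' \subset \rho(\pi_1(B))$ and a reduction system $C \subset \Sigma$ such that the image of $G'$ in each $MCG(Q_i)$ is either trivial or infinite cyclic. Pass to the finite cover $\tilde B \to B$ with $\pi_1(\tilde B) = \rho^{-1}(G')$ and the pullback bundle $\tilde E \to \tilde B$. By multiplicativity of simplicial volume under finite coverings it suffices to show $\|\tilde E\| = 0$, and observe that $\|\tilde B\| = 0$ because $\tilde B$ is a closed manifold of dimension $\geq 2$ with amenable fundamental group.

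After fixing a smooth representative of $C$, the decomposition of $\Sigma$ as the union of a tubular neighborhood $N(C)$ and the complementary pieces $Q_i$ is preserved componentwise by $G'$ and hence lifts to a decomposition of $\tilde E$ into sub-bundles $\tilde E_{N(C)}$ and $\tilde E_{Q_i}$ glued along circle bundles over $\tilde B$. Each gluing submanifold has fundamental group an extension of $\mathbb{Z}$ by $\pi_1(\tilde B)$, hence amenable; so by Gromov's additivity of simplicial volume along amenable submanifolds it is enough to show that each relative simplicial volume $\|\tilde E_{N(C)}, \partial\|$ and $\|\tilde E_{Q_i}, \partial\|$ vanishes. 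The annular bundle $\tilde E_{N(C)}$ itself has amenable fundamental group, so its relative simplicial volume vanishes immediately.

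For $\tilde E_{Q_i}$ with trivial holonomy on $Q_i$, the classifying map into $BMCG(Q_i)$ is null-homotopic, so $\tilde E_{Q_i} \cong \tilde B \times Q_i$; the double $\tilde B \times DQ_i$ then has simplicial volume bounded by the product $c\|\tilde B\|\cdot\|DQ_i\| = 0$, and amenability of the boundary yields $\|\tilde E_{Q_i}, \partial\| = 0$. When the holonomy on $Q_i$ is infinite cyclic, generated by some $\phi_i$, the induced map $\pi_1(\tilde B) \twoheadrightarrow \mathbb{Z}$ is realized by a smooth map $f_i \co \tilde B \to S^1$, and $\tilde E_{Q_i}$ is the pullback $f_i^* T_{\phi_i}$ of the mapping torus of $\phi_i$. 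Exchanging fiber and base, $\tilde E_{Q_i}$ also fibers over $T_{\phi_i}$ with fiber $N = f_i^{-1}(p)$ for a regular value $p \in S^1$, a closed manifold of dimension $\dim \tilde B - 1 \geq 1$ with amenable fundamental group (as it injects into $\pi_1(\tilde B)$). Doubling along the boundary, $D\tilde E_{Q_i}$ becomes an $N$-bundle over the closed manifold $DT_{\phi_i}$, and Gromov's vanishing theorem for fiber bundles with positive-dimensional amenable-$\pi_1$ fibers yields $\|D\tilde E_{Q_i}\| = 0$, whence $\|\tilde E_{Q_i}, \partial\| = 0$ as well.

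The main obstacle is exactly the infinite-cyclic case: since $\phi_i$ must be pseudo-Anosov on $Q_i$, its mapping torus $T_{\phi_i}$ is hyperbolic and has positive simplicial volume, so the $Q_i$-bundle structure over $\tilde B$ yields no information. The trick is to exchange fiber and base, viewing $\tilde E_{Q_i}$ instead as an amenable-fiber bundle over $T_{\phi_i}$; this is precisely where the hypothesis $\dim B \geq 2$ is essential, as it forces $\dim N \geq 1$. This matches the remark that the theorem fails for $\dim B = 1$, where the mapping tori themselves are the total spaces in question.
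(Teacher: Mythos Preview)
Your overall strategy---cut along the reduction system and handle the pieces separately---is the same as the paper's, but there is a genuine gap in the infinite-cyclic case. You assert that, having chosen a smooth map $f_i\colon \tilde B\to S^1$ realizing the surjection $\pi_1(\tilde B)\twoheadrightarrow\mathbb{Z}$, the pullback $\tilde E_{Q_i}=f_i^*T_{\phi_i}$ ``also fibers over $T_{\phi_i}$ with fiber $N=f_i^{-1}(p)$''. This would require $f_i$ itself to be a fiber bundle over $S^1$, and that is simply false in general: a closed manifold with amenable fundamental group surjecting onto $\mathbb{Z}$ need not fiber over the circle. For instance $\tilde B=(S^1\times S^3)\#\mathbb{CP}^2$ has $\pi_1=\mathbb{Z}$ but $\chi(\tilde B)=1\neq 0$, so it admits no fibration over $S^1$. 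Consequently the projection $\tilde E_{Q_i}\to T_{\phi_i}$ is only a continuous map whose fibers are the various preimages $f_i^{-1}(q)$, and your appeal to ``Gromov's vanishing theorem for fiber bundles'' is not available. (As a secondary point, even for a regular fiber $N$ the inclusion $N\hookrightarrow\tilde B$ need not be $\pi_1$-injective; what is true, and what you actually need, is only that its \emph{image} in $\pi_1(\tilde B)$ is amenable.)

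The paper circumvents this by first using Gromov's Mapping Theorem and the transfer to reduce to the case $B=T^n$; then the base really does split as $S^1\times T^{n-1}$ and each piece becomes $M_i\times T^{n-1}$, whose simplicial volume is killed by degree-$d$ self-maps. Your argument can be repaired in the same spirit without that preliminary reduction: the second projection $g\colon \tilde E_{Q_i}\to T_{\phi_i}$ is $\pi_1$-surjective with kernel isomorphic to $\ker\bigl(\pi_1(\tilde B)\to\mathbb{Z}\bigr)$, hence amenable, so the Mapping Theorem gives $\|[\tilde E_{Q_i},\partial]\|_1=\|g_*[\tilde E_{Q_i},\partial]\|_1$; since $\dim\tilde E_{Q_i}=\dim\tilde B+2\ge 4>3=\dim T_{\phi_i}$, the pushed-forward class vanishes. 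This replaces the missing bundle structure and uses $\dim B\ge 2$ in exactly the way you identified.
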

\begin{proof}
%If the fibre has genus $h \le 1$, then the fundamental group of $E$ is an extension of an amenable group with amenable kernel, so that $\pi_1(E)$ is again amenable and thus $||E|| = 0$.
%We now assume that the fibre has genus at least $2$. 
Since the vanishing of the simplicial volume is unchanged under finite covers, we may assume by Theorem \ref{Iva_amenable} that the image of the holonomy map of $E$ is free abelian. That is $E$ is obtained as the pullback of some bundle $E'$ over a torus $T^N$ and we have the following commuting diagram:
\[\xymatrix{ E \ar[r]^{\bar{f}} \ar[d]_{\pi} & E' \ar[d]^--{\pi'} \\
B \ar[r]^{f} & T^N.}\]
Since the kernel of the induced map $\pi_1(E) \longrightarrow \pi_1(E')$ is amenable, Gromov's Mapping Theorem implies that $||[E]||_1 = ||\bar{f}_*[E]||_1$ (cf.\ \cite{Gro}, p.\ 40). Moreover, by applying the transfer homomorphism in homology to the above diagram we have
\[\xymatrix{ H_{n + 2}(E) \ar[r]^{\bar{f}_*}  & H_{n + 2}(E')  \\
H_n(B) \ar[r]^{f_*} \ar[u]^{\pi^{!}} & \ar[u]_{(\pi')^{!}} H_n(T^N).}\]
Since every class in $H_*(T^N)$ can be represented as a sum of tori, the class $f_{*}([B])$ can also be represented as a sum of tori. The commutativity of the above diagram then implies that the class $\bar{f}_*[E]$ is representable by a sum of the fundamental classes of several $\Sigma$-bundles over tori of dimension $n = dim(B)$. Thus it suffices to prove the theorem under the assumption that the base $B$ is a torus of dimension $n \geq 2$ and from now on we shall assume this.

We let $C_{max}$ be a maximal reduction system for the holonomy of $E$ which gives a fiberwise embedded $S^1$-bundle $\xi_i \subset E$ for each component of $C_{max}$. These $S^1$-bundles are $\pi_1$-injective and have amenable fundamental group. Thus Gromov's Cutting-off Theorem (cf.\ \cite{Gro}, p.\ 58) implies 
\[||E|| = ||E \setminus \bigcup_i \thinspace \xi_i||.\]
Since the holonomy group of each component $Q_i$ of $\Sigma \setminus C_{max}$ is either infinite cyclic or trivial, we see that each component of $E \setminus \bigcup_i \thinspace \xi_i$ is diffeomorphic to $M_i \times T^{n-1}$, where $M_i$ is a mapping torus with fiber $Int(Q_i)$ and holonomy $\psi_i$. This manifold admits proper self-maps of arbitrary degree, since $n \geq 2$. Hence the simplicial volume is either zero or infinite. As $E$ is closed we know that $||E|| < \infty$ and we conclude that
\[ ||E|| = ||E \setminus \bigcup_i \thinspace \xi_i|| = \sum_{i} ||M_i \times T^{n-1}|| = 0.  \]
\end{proof}
As a consequence of Theorem \ref{amenable_vanish} and the boundedness of the vertical Euler class (cf.\ Proposition \ref{Euler_bounded}) we will show that all MMM-classes vanish on amenable subgroups of $\Gamma_h = MCG(\Sigma_h)$.
\begin{thm}[Morita, \cite{Mor3}]\label{amenable_MMM}
The images of the MMM-classes in $H^*(G, \mathbb{Q})$ are trivial for amenable $G \subset \Gamma_h$.
\end{thm}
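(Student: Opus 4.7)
The plan is to assemble the two ingredients already proved in this section --- the boundedness of the vertical Euler class (Proposition \ref{Euler_bounded}) and the vanishing of simplicial volume over amenable bases (Theorem \ref{amenable_vanish}) --- glued together by the push--pull adjunction for fiber integration. Throughout one may restrict to $k \geq 1$, since $e_0 = 2-2h$ lies in $H^0$ and is immaterial for the statement.

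First I would reduce the cohomological statement to a homological one: $e_k|_G = 0$ in $H^{2k}(G, \mathbb{Q})$ if and only if $\langle e_k, \alpha \rangle = 0$ for every $\alpha \in H_{2k}(G, \mathbb{Q})$. By Thom's theorem the natural map $\Omega_{2k}^{SO}(BG) \otimes \mathbb{Q} \to H_{2k}(BG, \mathbb{Q})$ is surjective, so every such $\alpha$ can be written, up to a nonzero rational scalar, as $f_*[B]$ for some closed oriented manifold $B$ of dimension $2k$ and some map $f \co B \to BG$. Composing with the inclusion $BG \hookrightarrow B\Gamma_h$ and pulling back the universal surface bundle produces a surface bundle $\Sigma_h \to E \to B$ whose holonomy factors through the amenable group $G$. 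Because $\dim(B) = 2k \geq 2$, Theorem \ref{amenable_vanish} applies and gives $||E|| = 0$.

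Next I would invoke the adjunction for integration along the fiber, namely
\[ \langle e_k, f_*[B] \rangle \;=\; \langle \pi_!(e^{k+1}), [B] \rangle \;=\; \langle e^{k+1}, [E] \rangle, \]
where on the right $e$ denotes the vertical Euler class of the pulled-back bundle. By Proposition \ref{Euler_bounded} the class $e$ is bounded, and since the $\ell^\infty$-norm is submultiplicative under cup products, the power $e^{k+1}$ is bounded as well. Duality between $\ell^1$-homology and $\ell^\infty$-cohomology then yields
\[ |\langle e^{k+1}, [E] \rangle| \;\leq\; ||e^{k+1}||_\infty \cdot ||E|| \;=\; 0, \]
so that $\langle e_k, \alpha \rangle = 0$ for every $\alpha \in H_{2k}(G,\mathbb{Q})$, as required.

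I do not expect any genuine obstacle here: the serious work is already packaged into Proposition \ref{Euler_bounded} and Theorem \ref{amenable_vanish}. The only auxiliary facts to be cited are the manifold representability of positive-degree rational homology classes of $BG$ and the standard submultiplicativity $||\alpha \smile \beta||_\infty \leq ||\alpha||_\infty \cdot ||\beta||_\infty$, both of which are entirely routine.
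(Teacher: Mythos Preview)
Your argument has a genuine gap at the point where you invoke Theorem \ref{amenable_vanish}. That theorem requires $\pi_1(B)$ itself to be amenable, not merely that the holonomy $\rho\co\pi_1(B)\to\Gamma_h$ factors through an amenable group. Thom's representability theorem gives you a closed oriented $2k$-manifold $B$ together with a map $f\co B\to BG$, but it gives no control whatsoever over $\pi_1(B)$; in general $\pi_1(B)$ will be far from amenable. If you look back at the proof of Theorem \ref{amenable_vanish}, the amenability of $\pi_1(B)$ is used in an essential way: one needs the kernel of $\pi_1(E)\to\pi_1(E')$ to be amenable in order to apply Gromov's Mapping Theorem, and this kernel injects into $\ker\bigl(\pi_1(B)\to\pi_1(T^N)\bigr)$, which has no reason to be amenable if $\pi_1(B)$ is not.

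The fix is precisely what the paper does, and it makes the appeal to Thom unnecessary. First use Theorem \ref{Iva_amenable} and a transfer argument to replace $G$ by a finite-index free abelian subgroup $\mathbb{Z}^N$, so that $BG$ becomes the torus $T^N$. Then $H_{2k}(T^N,\mathbb{Q})$ is spanned by the fundamental classes of embedded subtori $T_j\hookrightarrow T^N$, and these \emph{do} have amenable fundamental group. Now your estimate
\[
|\langle e_k,[T_j]\rangle| = |\langle e^{k+1},[E_j]\rangle| \le \|e^{k+1}\|_\infty\cdot\|E_j\|
\]
goes through exactly as you wrote it, with Theorem \ref{amenable_vanish} legitimately applied to each $E_j\to T_j$. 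In other words, once you repair the gap the proof collapses to the paper's own argument; the detour through oriented bordism does not buy anything.
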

\begin{proof}
After taking a finite index subgroup we may assume that $G = \mathbb{Z}^N$ is free abelian. This subgroup corresponds to a surface bundle 
$$\Sigma_h \longrightarrow E \longrightarrow T^N$$ over the $N$-torus. Moreover, by Proposition \ref{Euler_bounded} the vertical Euler class is bounded with $||e||_{\infty} = C < \infty$ and thus $e^{k+1} \in H^{2k +2}(\Gamma_{h,1})$ is bounded with 
\[||e^{k+1}||_{\infty} \leq C^{k+1} .\]
The group $H_{2k}(T^N)$ has a basis consisting of (embedded) tori $T_j \hookrightarrow T^N$ and the theorem will follow if we show that $e_k$ is trivial on each $T_j$. 

We let $E_j$ denote the restriction of $E$ to $T_j$ and compute
\begin{align*}
|\langle e_k(E), [T_j] \rangle| & = |\langle \pi_! e^{k+1}, [T_j] \rangle|\\
& = |\langle e^{k+1}, \pi^![T_j] \rangle|\\
& = |\langle e^{k+1}, [E_j] \rangle| \\
& \leq ||e^{k+1}||_{\infty} \thinspace ||E_j|| \leq C^{k+1}||E_j|| .
\end{align*}
We may assume $\dim(T_j) \geq 2$, since $e^{k+1}$ is a cohomology class in degree at least four. Hence Theorem \ref{amenable_vanish} implies that $||E_j|| = 0$ and the result follows.
\end{proof}
\section{MMM-classes and products}
We will next investigate which classes in $H_*(\Gamma_h)$ can be represented as the image of the fundamental class of a non-trivial product $B = M_1 \times M_2$ of closed manifolds, where $dim(M_1),dim(M_2) > 0$. In particular, we will show that if $m = \text{max} \ \{ dim(M_1) , dim(M_2) \}$, then $e_k([B]) = 0$ for all $k > \frac{m}{2}$. This means that in particular $e_k$ vanishes modulo torsion for any bundle over a non-trivial product $N$ of dimension $dim(N) = 2k$. We begin by proving the following lemma.
\begin{lem}\label{MMM_prod}
Let $\Sigma$ be a closed, connected, oriented surface and let $C$ be a disjoint collection of embedded circles on $\Sigma$. We let $Q_j$ be the components of $\Sigma \setminus C$ and let $\overline{Q}_j$ be the closed surface obtained from $Q_j$ by identifying each boundary component to a point. We further let $\bar{\rho}_j$ be the natural map $MCG(\Sigma, C) \to MCG(\overline{Q}_j)$. Then the $k$-th MMM-class on $MCG(\Sigma, C)$ satisfies
\[e_k = \sum_{j = 1}^n \bar{\rho}_j^* e_k.\]
\end{lem}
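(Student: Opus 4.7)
The plan is to verify the identity for an arbitrary smooth $\Sigma$-bundle $\pi: E \to B$ whose holonomy takes values in $MCG(\Sigma, C)$; the universal statement in $H^{2k}(BMCG(\Sigma, C))$ then follows by naturality. Since every element of $MCG(\Sigma, C)$ admits a smooth representative fixing $C$ pointwise, we may choose the smoothing of $E$ so that the fiberwise reduction $\mathcal{C} \subset E$ is a disjoint union of trivial oriented $S^1$-subbundles. Cutting $E$ fiberwise along $\mathcal{C}$ gives sub-bundles $E_j \subset E$ with fiber $Q_j$, and fiberwise capping each boundary circle with a trivial $D^2$-bundle produces the closed-surface bundles $E'_j \to B$ with fiber $\overline{Q}_j$, satisfying $e_k(E'_j) = \bar\rho_j^* e_k$. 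Under the inclusions $E_j \hookrightarrow E$ and $E_j \hookrightarrow E'_j$ one has $T_{vert}E|_{E_j} \cong T_{vert}E'_j|_{E_j}$.

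The restrictions $T_{vert}E|_\mathcal{C}$ and $T_{vert}E'_j|_{\text{caps}}$ both admit nowhere-zero sections: the former via the fiberwise tangent to the oriented circles of $\mathcal{C}$, the latter because the cap disk bundles are trivial. Hence $e(E)|_\mathcal{C} = 0$ and $e(E'_j)|_{\text{caps}} = 0$, so the Euler classes lift to relative classes $\tilde e \in H^2(E, \mathcal{C})$ and $\tilde e_j' \in H^2(E'_j, \text{caps})$. Excision yields canonical identifications $H^*(E, \mathcal{C}) \cong \bigoplus_j H^*(E_j, \partial E_j)$ and $H^*(E'_j, \text{caps}) \cong H^*(E_j, \partial E_j)$, under which $\tilde e$ decomposes as $\sum_j \tilde e_j$ while $\tilde e_j'$ corresponds to the same summand $\tilde e_j$, both being the unique relative lift of the common restriction $e(T_{vert}E|_{E_j}) \in H^2(E_j)$.

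Naturality of fiber integration with respect to these relative classes then yields
\[
e_k(E) = \pi_!(\tilde e^{k+1}) = \sum_j \pi_{j,!}(\tilde e_j^{k+1}) = \sum_j \pi'_{j,!}((\tilde e_j')^{k+1}) = \sum_j e_k(E'_j).
\]
The main obstacle is securing the vanishing of $e(E'_j)|_{\text{caps}}$, which underpins both the relative lift and the excision identification; this depends on the freedom to choose a smoothing of $E$ making $\mathcal{C}$, and therefore each of its disk-bundle fillings, trivial, and such a smoothing exists precisely because mapping classes in $MCG(\Sigma, C)$ admit representatives fixing $C$ pointwise. Since both sides of the identity are bundle-isomorphism invariants depending only on the mapping class group holonomy, the identity verified for this preferred smoothing descends to the universal setting.
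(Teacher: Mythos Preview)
Your approach is essentially the paper's argument recast in the language of relative cohomology and excision rather than quotient spaces and wedge sums, with disk-capping in place of boundary-collapsing; these are equivalent formulations. However, there is a genuine gap in the step where you assert that $\tilde e_j$ and $\tilde e'_j$ coincide in $H^2(E_j,\partial E_j)$ as ``the unique relative lift'' of $e(T_{vert}E|_{E_j})$. The lift is \emph{not} unique: the connecting map $H^1(\partial E_j)\to H^2(E_j,\partial E_j)$ is nonzero (its image pairs nontrivially with the relative fundamental class of the fibre), so two lifts can differ by $\delta(\gamma)$ for $\gamma\in H^1(\partial E_j)$. In fact your two lifts \emph{do} differ: the one coming from $E$ uses the tangent-to-circle trivialisation on $\partial E_j$, whereas any nowhere-zero section on a cap $D^2\times B$ restricts to a section of winding number~$0$ on the boundary circle, so the two trivialisations differ by winding number~$1$ on each component.

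The gap is repairable for $k\geq 1$ (which is all that is needed downstream): any $\delta(\gamma)$ lies in the kernel of $H^2(E_j,\partial E_j)\to H^2(E_j)$, and for $\alpha,\beta\in H^*(E_j,\partial E_j)$ the relative cup product satisfies $\alpha\smile\beta = j^*(\alpha)\cdot\beta$, so $\delta(\gamma)\smile\beta=0$ for every relative class $\beta$. Expanding $(\tilde e'_j+\delta(\gamma))^{k+1}$ then shows $\tilde e_j^{\,k+1}=(\tilde e'_j)^{k+1}$ whenever $k+1\geq 2$. (For $k=0$ the identity is actually false, since $\chi(\Sigma)\neq\sum_j\chi(\overline Q_j)$ when $C\neq\emptyset$.) The paper sidesteps this issue entirely by collapsing $\xi_C$ rather than capping: the vertical bundle then descends canonically to $E^*=\bigvee_j E_j^*$ via the tangent-to-circle trivialisation, so no comparison of lifts is needed.
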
 
\begin{proof}
For simplicity let $G_j = MCG(Q_j)$ and $G_C = MCG(\Sigma,C)$. We also let $\overline{Q}_j$ denote the closed surface obtained by identifying each boundary component of $Q_j$ to a point and $\overline{G}_j = MCG(\overline{Q}_j)$ the corresponding mapping class group.

By definition, the universal bundle over $BG_C$ has a natural decomposition
\[E = \bigcup_{j = 1}^n E_j,\]
where $E_j$ is a bundle with fiber $Q_j$. Moreover, the vertical bundle is trivial over $\partial E_j$ with a trivialisation given by taking vectors tangent to the boundary. We let $\xi_C$ denote the union of the $S^1$-bundles corresponding to $\partial E_j$. Then the vertical vector bundle on $E$ descends to a bundle on the quotient space $E^* = E / \xi_C$. Similarly the vertical bundle on $E_j$ descends to $E^*_j = E_j / \partial E_j$ and we note that $E^* = \bigvee_{j = 1}^n E_j^*$. Since $E^*$ is a wedge sum and the Euler class is natural under pullbacks, we compute 
\begin{align}\label{sum_formula}
e^{k+1}(E) = \sum_{j = 1}^n e^{k+1}(E^*_j).
\end{align}
We let $\overline{E}_j$ denote the bundle obtained from $E_j$ by fiberwise identifying each boundary component of $Q_j$ to a point and we let $BG_C \stackrel{\bar{\rho}_j} \longrightarrow B \overline{G}_j$ be the classying map of this bundle. We also let $\overline{E}$ denote the union of the $\overline{E}_j$. In this way we obtain the following commuting diagram:
\[\xymatrix{ & E_j^* & & \\
E \ar[d]^{\pi} \ar[ur] \ar[r] & \overline{E} \ar[d] \ar[u] & \overline{E}_j \ar[d]^{\overline{\pi}_j} \ar[l] \ar[r] \ar[ul] & E \overline{G}_j \ar[d]^{\bar{p}_j}\\
 B G_C \ar[r]^{Id} & B G_C  & B G_C \ar[r]^{\bar{\rho}_j} \ar[l]_{Id}& B\overline{G}_j.}\]
We conclude that $\pi_! (e^{k+1}(E^*_j)) = \bar{\rho}_j^* e_k(E \overline{G}_j)$ and the lemma follows by equation (\ref{sum_formula}).
\end{proof}
We may now prove the following theorem.
\begin{thm}\label{product_MMM}
Let $\Sigma \longrightarrow E \longrightarrow B$ be an oriented surface bundle over a base $B = M_1 \times M_2$ that is a non-trivial product. If the dimension $m = \text{max} \ \{ dim(M_1) , dim(M_2) \}$, then $e_k(E) = 0$ as a rational class for all $k > \frac{m}{2}$.
\end{thm}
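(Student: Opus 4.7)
The strategy is to follow the approach of Theorem~\ref{amenable_MMM}, replacing the amenable-subgroup vanishing of simplicial volume by a direct structural analysis of the holonomy over each piece of a maximal reduction system. After passing to a finite cover of $B=M_1\times M_2$---which is harmless because pullback under a finite cover is injective on rational cohomology---the holonomy of $E$ is pure and factors through $MCG(\Sigma,C)$ for a maximal reduction system $C$. Lemma~\ref{MMM_prod} then yields
\[ e_k(E) \;=\; \sum_{j=1}^n \bar\rho_j^{\,*}\, e_k(\bar E_j), \]
and it suffices to show that $e_k(\bar E_j)=0$ in $H^{2k}(B;\mathbb Q)$ for each closed $\bar Q_j$-bundle $\bar E_j\to B$ appearing in the sum.

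For each $j$, the holonomy $\bar\rho_j\colon \pi_1(M_1)\times\pi_1(M_2)\to MCG(\bar Q_j)$ has irreducible image $H^j$, and the two factor-images $H_1^j$ and $H_2^j$ commute inside $H^j$. If $H^j$ is finite, a further finite cover trivializes $\bar E_j$; if $H^j$ is infinite virtually cyclic, the classifying map factors through $S^1$, so $e_k(\bar E_j)$ is pulled back from $H^{2k}(S^1;\mathbb Q)=0$. The substantive case is when $H^j$ is infinite, irreducible, and not virtually cyclic, and here I would prove the key structural claim that one of $H_1^j$, $H_2^j$ must itself be finite. Since the commuting action of one factor permutes any finite reduction system of the other, both $H_i^j$ are irreducible on $\bar Q_j$. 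If both were also infinite, each would contain a pseudo-Anosov by Ivanov's dichotomy; combining the facts that two commuting pseudo-Anosovs share a common root, and that an irreducible non-virtually-cyclic subgroup of $MCG$ contains two \emph{independent} pseudo-Anosovs with finite common centralizer, one forces $H^j$ itself to lie inside a single virtually cyclic centralizer of some pseudo-Anosov, contradicting the assumption. Hence one of $H_1^j$, $H_2^j$ is finite.

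Once some $H_i^j$ is known to be finite, a final finite cover of the factor $M_i$ trivializes the holonomy on $\pi_1(M_i)$, and $\bar E_j$ becomes pulled back from a $\bar Q_j$-bundle over the other factor $M_{i'}$ via the projection $B\to M_{i'}$. Therefore
\[ e_k(\bar E_j) \;\in\; \operatorname{Im}\bigl( H^{2k}(M_{i'};\mathbb Q) \to H^{2k}(B;\mathbb Q) \bigr), \]
which vanishes because $2k>m\geq\dim M_{i'}$. The main obstacle is the structural claim in the second paragraph---ruling out the scenario in which both commuting factors $H_i^j$ are large---whose proof rests on a careful application of Ivanov's structure theorem for subgroups of the mapping class group.
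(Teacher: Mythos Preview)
Your approach is essentially the paper's: decompose $e_k$ via Lemma~\ref{MMM_prod} along a maximal reduction system, show that on each irreducible piece the holonomy either factors through one projection $G\to G_i$ or has (virtually) cyclic image, and conclude by dimension-counting in the first case and amenable vanishing in the second. The paper's execution of the key structural step is more streamlined than your sketch in two ways. First, it passes once at the outset to the torsion-free kernel of $\Phi_3\colon MCG(\Sigma)\to \mathrm{Aut}(H_1(\Sigma,\mathbb Z_3))$, which converts all of your ``finite'' and ``virtually cyclic'' cases into ``trivial'' and ``infinite cyclic'' and avoids the repeated piecewise finite covers. Second, rather than arguing by contradiction, it proves the trichotomy directly: pick any pseudo-Anosov $\rho_j(a,b)$, observe that $\langle\rho_j(a,e),\rho_j(e,b)\rangle$ is an irreducible abelian subgroup of $\ker\Phi_3$ and hence infinite cyclic generated by a pseudo-Anosov, and then use that centralizers of pseudo-Anosovs in $\ker\Phi_3$ are infinite cyclic to conclude that either $\rho_j(G_2)$ is trivial or $\rho_j(G)$ itself is cyclic. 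Your appeal to ``two independent pseudo-Anosovs with finite common centralizer'' is a red herring; only the (virtual) cyclicity of the centralizer of a single pseudo-Anosov is needed, and with that your contradiction argument goes through.
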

\begin{proof}
We let $G_i = \pi_1(M_i)$ and we also let $G = G_1 \times G_2 \stackrel{\rho} \longrightarrow MCG(\Sigma)$ denote the holonomy map of $E$. If the image of $G$ lies in the kernel of the modular map
\[MCG(\Sigma) \stackrel{\Phi_3}  \longrightarrow Aut(H_1(\Sigma, \mathbb{Z}_3))\]
then the existence of a maximal reduction system is guaranteed by (\cite{Iva2}, Cor. 7.18). %, since the kernel of this map consists of \emph{pure} elements (\cite{Iva2}, p.\ 3) of $MCG(\Sigma)$, i.e.\  for each $f \in Ker(\Phi_3)$ there is some reduction system $C$ so that (up to isotopy) $f$ preserves each component of $\Sigma \setminus C$ and is either pseudo-Anosov or cyclic on each of these components. 
Thus after taking finite index subgroups of the $G_i$ we may assume that this is the case and without loss of generality we have a maximal reduction system $C_{max}$ so that the images in $G \stackrel{\rho_i} \longrightarrow MCG(Q_i)$ are trivial or irreducible. If $\rho_i(G)$ is non-trivial it must contain a pseudo-Anosov element $\phi = \rho_i(a,b)$. Since the subgroup generated by $\alpha = \rho_i(a,e)$ and $\beta = \rho_i(e , b)$ is irreducible, abelian and consists of elements in the kernel of the modular map $\Phi_3$, it must be infinite cyclic and is generated by a pseudo-Anosov element $\psi$ (\cite{Iva2}, Cor. 7.14 and Cor. 8.6). In particular, $\alpha$ is pseudo-Anosov if it is non-trivial and its centraliser $C(\alpha)$ in the kernel of $\Phi_3$ is infinite cyclic (\cite{Iva2}, Lem. 8.13).

Without loss of generality we assume that the $\alpha$ defined above is non-trivial. Then since the subgroup $\rho_i(G_2)$ commutes with $\alpha$ it is contained in the centralizer of $\alpha$ and it follows that $\rho_i(G_2)$ is infinite cyclic and generated by a pseudo-Anosov element or it is trivial. If it is non-trivial then the fact that $G_1$ commutes with $G_2$ again implies that the image $\rho_i(G)$ is also cyclic. Thus we conclude that either $\rho_i$ factors through one of the projections $G \stackrel{\pi_i} \longrightarrow G_i$ or that the image is cyclic (or trivial).

We let $\Sigma_1$ denote the subsurface of $\Sigma$ on which $\rho(G_1)$ is non-trivial but $\rho(G_2)$ is trivial. Similarly, we let $\Sigma_2$ be the subsurface where $\rho(G_2)$ is non-trivial but $\rho(G_1)$ is trivial. We finally let $\Sigma_3$ be the subsurface on which the holonomy is component-wise cyclic or trivial. We let $\bar{\rho}_j$ denote the induced maps to $MCG(\overline{\Sigma}_j)$ and by applying Lemma \ref{MMM_prod} we conclude that
\[e_k = \sum_{j = 1}^3 \bar{\rho}_j^* e_k.\]
The first two summands vanish for dimension reasons if $k > \frac{m}{2}$ and since the image of $\bar{\rho}_3$ is abelian the third vanishes by Theorem \ref{amenable_MMM}. 
\end{proof}
We contrast the above result with those of Morita in \cite{Mor}. In particular, Morita showed that for sufficiently large genus any MMM-class is detected by an iterated surface bundle given by what is now called the Morita $m$-construction. Repeated application of Theorem \ref{product_MMM} implies that the only MMM-classes that are possibly non-trivial over a base that is a product of surfaces are of the form $e_1^k$. Moreover, it can be shown that these classes can be detected by products of Riemann surfaces if the genus of the fiber satisfies $h \geq 3k$. In fact, the proof of Theorem \ref{product_MMM} means that this bound is sharp for such bundles, since $e_1$ is trivial for bundles with fiber of genus $h \leq 2$.

\section{MMM-classes are hyperbolic}\label{MMM_Gromov_hyp}
As previously mentioned Morita has conjectured that the MMM-classes have representatives that are bounded in the sense of Gromov. In particular, he showed that $e_k$ vanishes on amenable groups (cf.\ Theorem \ref{amenable_MMM}). In studying certain properties of K\"ahler manifolds Gromov introduced the weaker notion of \emph{hyperbolicity}, which is called $\tilde{d}(\textrm{boundedness})$ in \cite{Gro2}. We will extend Morita's original argument to show that the MMM-classes are hyperbolic.

We shall first recall the definition of hyperbolicity for simplicial complexes following \cite{BruKot}. To this end we need to consider metrics and differential forms on simplicial complexes. Recall that a metric on a simplicial complex is given by a metric $g_{\sigma}$ on each simplex $\sigma$ so that when $\tau \subset \sigma$ is a face, one has $g_{\sigma}|_{\tau} = g_{\tau}$. Similarly, a differential form is a collection of forms on each simplex compatible with restriction to faces. One can then define the exterior derivative on each simplex and the resulting cohomology is isomorphic to ordinary cohomology for simplicial complexes (cf.\ \cite{Swan}).
\begin{thm}[Simplicial de Rham Theorem]
Let $X$ be a simplicial complex. Then there is a natural isomorphism $H^k_{dR}(X) \stackrel{\Psi} \longrightarrow H_{\Delta}^k(X, \mathbb{R})$ from de Rham cohomology to simplicial cohomology given by integration over chains.
\end{thm}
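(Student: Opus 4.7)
The plan is to define $\Psi$ explicitly by integration and then prove it is an isomorphism by comparing two descriptions of \v{C}ech-style cohomology for the open cover of $X$ by stars of vertices. At the cochain level, for a piecewise-smooth form $\omega$ and a simplex $\sigma$ of dimension $k$, set $\Psi(\omega)(\sigma)=\int_\sigma \omega|_\sigma$, which makes sense because $\omega|_\sigma$ is smooth by definition of a piecewise-smooth form. Since the restrictions of $\omega$ to the codimension-one faces of $\sigma$ agree with the forms living on those faces by the compatibility axiom, the classical Stokes theorem applied simplex-by-simplex gives
\[
\Psi(d\omega)(\sigma)=\int_\sigma d\omega = \int_{\partial\sigma}\omega = \Psi(\omega)(\partial\sigma) = (\delta \Psi\omega)(\sigma),
\]
so $\Psi$ is a chain map. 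Its naturality with respect to simplicial maps is immediate from change of variables.

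To show $\Psi$ induces an isomorphism on cohomology I would establish a piecewise-smooth Poincar\'e lemma on the open star $\mathrm{st}(v)$ of any vertex $v$ of $X$. Each simplex containing $v$ admits a linear radial retraction onto $v$, and these retractions fit together into a deformation retraction of $\mathrm{st}(v)$ onto $v$. Integration along this homotopy defines a cone operator on piecewise-smooth forms: on each simplex containing $v$ it is the usual cone, and face-compatibility of the output is inherited from face-compatibility of the input because the cone operators on adjacent simplices agree on their common face. Consequently $H^k_{dR}(\mathrm{st}(v))$ vanishes for $k\geq 1$ and equals $\mathbb{R}$ for $k=0$, matching the (obvious) behaviour of $H^*_\Delta(\mathrm{st}(v),\mathbb{R})$.

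The final step is to compare the two cohomologies via the nerve of the vertex-star cover. A nonempty intersection $\mathrm{st}(v_0)\cap\dots\cap\mathrm{st}(v_p)$ is the open star of the simplex $[v_0,\dots,v_p]$, which is contractible and to which the same Poincar\'e lemma applies, so the cover is good for both theories. Running the \v{C}ech double complex for piecewise-smooth de Rham forms and for simplicial cochains in parallel, the map $\Psi$ furnishes a morphism of double complexes that is an isomorphism on the augmentation column; the standard spectral-sequence comparison then yields the isomorphism $\Psi\colon H^*_{dR}(X)\to H^*_\Delta(X,\mathbb{R})$. Equivalently, one can induct on finite subcomplexes using a Mayer--Vietoris sequence and the five lemma. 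The main obstacle throughout is the Poincar\'e lemma for \emph{piecewise}-smooth forms: one must verify that the cone operator, which is a fibre integral along the radial homotopy, really returns a form satisfying the face-compatibility axiom, and not merely an object that is smooth in the interior of each simplex. Once this compatibility is checked, the remainder of the argument is a routine adaptation of the classical de Rham theorem.
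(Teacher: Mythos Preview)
The paper does not actually prove this theorem; it is stated as a known result with a reference to Swan~\cite{Swan}. What the paper does supply, immediately afterwards, is an explicit chain-level inverse $\Phi$ built from Whitney's elementary forms
\[
\Phi_{\sigma^*} = k!\sum_{i=0}^k (-1)^i \mu_i\, d\mu_0 \wedge \cdots \wedge \widehat{d\mu_i}\wedge\cdots\wedge d\mu_k,
\]
and a citation to Whitney~\cite{Whit} for the fact that $\Phi$ really is an inverse to $\Psi$.

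Your approach is a correct and standard proof, but a genuinely different one: you establish a piecewise-smooth Poincar\'e lemma on open stars and then run a \v{C}ech double-complex (or Mayer--Vietoris) comparison. This is the sheaf-theoretic route and it is perfectly sound; the compatibility check you flag for the cone operator is indeed the only subtle point, and it goes through because the radial retractions on adjacent simplices restrict identically to the common face. The trade-off is that your argument certifies the isomorphism abstractly but does not produce an explicit inverse. For the paper's purposes the Whitney map $\Phi$ is not merely a proof device: it is used essentially in Proposition~\ref{bound_deRham} and Lemma~\ref{Kedra_bounded} to transport boundedness between simplicial cochains and differential forms. So while your proof of the bare isomorphism is fine, the paper's route via Whitney buys the concrete formula that is needed downstream.
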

The de Rham isomorphism $\Psi$ has a natural inverse on the chain level. This is defined as follows: let $\sigma$ be an oriented simplex of $X$ and let $\mu_i$ denote the baracentric coordinate map defined by the the $i$-th vertex $v_i$ of $\sigma$. That is for a simplex $\tau$ of $X$ we define $\mu_i |_{\tau}$ to be zero if $v_i$ is not a vertex of $\tau$, otherwise we let $\mu_i |_{\tau}(p)$ be the coefficient of $v_i$ given by writing $p$ as a convex combination of the vertices of $\tau$. The functions $\mu_i$ are well-defined elements in $\Omega^0(X)$. Then for any oriented cosimplex $\sigma^* \in C_{\Delta}^k(X, \mathbb{R})$ we define
\[\Phi_{\sigma^*} = k ! \sum_{i^=0}^k (-1)^{i} \mu_i d\mu_0 \wedge ... \wedge \widehat{d\mu_i} \wedge... d \mu_k.\]
This is a so-called elementary $k$-form and has support in the set $st(\sigma)$. For an arbitrary simplicial cochain $c = \sum \lambda_{\sigma} \sigma^*$ we set
\[\Phi(c) =  \sum \lambda_{\sigma} \Phi_{\sigma^*}.\]
and this map is the desired inverse of $\Phi$ (cf.\ \cite{Whit}, p.\ 229 ff).

Recall that a $k$-form $\alpha \in \Omega^k(M)$ on a manifold is bounded if
\[|| \alpha ||_g = \sup_{x \in M} | \alpha_x (e_1,...e_k)| < \infty\]
for all $k$-tuples of orthonormal vectors in $T_x M$. For a simplicial complex a form is bounded if there is a universal bound over all simplices. 

For a simplicial cochain $c \in C_{\Delta}^k(X, \mathbb{R})$ one also has a notion of boundedness. Indeed, one has the $L^{\infty}$-norm 
\[||c||_{\Delta} = \sup_{\sigma \in S_{\Delta}^k(X)} |c(\sigma)|\]
where the supremum is taken over all $k$-simplices $\sigma$ of $X$. If this number is finite then $c$ is said to be bounded. Moreover, the set of bounded simplicial chains is a subcomplex of $C^*_{\Delta}(X, \mathbb{R})$ that we denote by $\hat{C}^*_{\Delta}(X, \mathbb{R})$. Under certain fairly natural assumptions the de Rham isomorphism and its inverse preserve boundedness:
\begin{prop}\label{bound_deRham}
Let $X$ be a simplicial complex and let $g$ be a metric so that for all $k$-simplices $Vol(\sigma, g) \leq C_k$. Then the map \linebreak $\Omega^k(X) \stackrel{\Phi} \longrightarrow C_{\Delta}^k(X, \mathbb{R})$ given by integration over chains preserves boundedness.

Conversely, assume that the star of each simplex of $X$  contains a bounded number of $k$-simplices for some universal constant $S_k$ and that $g$ is a metric on $X$ so that the $1$-forms $d\mu_i$ given by baracentric coordinates are uniformly bounded. Then the inverse of the de Rham isomorphism preserves boundedness.

In particular, if $X \stackrel{p} \longrightarrow Y$ is a (simplicial) covering map and $Y$ is finite, then $X$ endowed with the pullback metric satisfies the hypotheses above.
\end{prop}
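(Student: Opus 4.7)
The plan is to treat the three assertions in turn, with the main work lying in the pointwise bound required for the inverse map.

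For the first, given $\alpha \in \Omega^k(X)$ with $\|\alpha\|_g = A < \infty$, the direct estimate
\[ \left|\int_\sigma \alpha\right| \leq A \cdot \mathrm{Vol}(\sigma, g) \leq C_k \, A \]
holds for every $k$-simplex $\sigma$, immediately yielding $\|\Phi(\alpha)\|_\Delta \leq C_k A$ and hence boundedness.

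For the second, I would establish a pointwise bound on the inverse map $c \mapsto \sum_\sigma \lambda_\sigma \Phi_{\sigma^*}$. Fix $p \in X$ and let $\tau$ be the open simplex containing it. Since $\Phi_{\sigma^*}$ is supported in the closed star of $\sigma$, the only $k$-simplices contributing at $p$ are those $\sigma$ that are faces of $\tau$. To bound their number uniformly, fix any vertex $v$ of $\tau$: every $k$-face of $\tau$ containing $v$ belongs to $\mathrm{st}(v)$, which by hypothesis contains at most $S_k$ many $k$-simplices, forcing $\binom{\dim \tau}{k} \leq S_k$ whenever $\dim \tau \geq k$. This bounds $\dim \tau$ and hence the total number $\binom{\dim \tau + 1}{k+1}$ of $k$-faces of $\tau$ by a constant $N = N(k, S_k)$. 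For each such $\sigma$, using $|\mu_i(p)| \leq 1$, the hypothesis $\|d\mu_i\|_g \leq M$, and the standard estimate $\|d\mu_{i_1} \wedge \cdots \wedge d\mu_{i_k}\|_g \leq M^k$, one obtains
\[ |\Phi_{\sigma^*}(p)(e_1, \ldots, e_k)| \leq (k+1)! \, M^k \]
on any orthonormal $k$-frame. Summing over the at most $N$ contributing $\sigma$ with $|\lambda_\sigma| \leq \|c\|_\Delta$ yields a uniform bound of $N (k+1)! M^k \|c\|_\Delta$ on the inverse applied to $c$.

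The statement about coverings is then essentially formal: a simplicial covering $q \colon X \to Y$ restricts to a homeomorphism of the closed star of each simplex of $X$ onto the closed star of its image in $Y$, and with the pullback metric each simplex of $X$ becomes isometric to its image. Consequently simplex volumes, numbers of $k$-simplices in each star, and the norms of the $d\mu_i$ on $X$ are all dominated by the corresponding maxima on the finite complex $Y$, verifying the three hypotheses.

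The main obstacle is the combinatorial bookkeeping in the middle step: correctly identifying which elementary forms $\Phi_{\sigma^*}$ are nonzero at a given point and extracting a uniform bound on their number from the star hypothesis. Once that is in place, the remainder reduces to elementary estimates.
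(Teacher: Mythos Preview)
Your argument is correct and parallels the paper's closely. For the first and third assertions you do essentially what the paper does. In the middle step the paper phrases the support condition as ``$\sigma$ lies in $st(\tau_p)$'' and invokes the star hypothesis directly for $\tau_p$, whereas you (correctly) identify the contributing $\sigma$ as the $k$-faces of $\tau_p$ and then bound their number indirectly by applying the star hypothesis to a \emph{vertex} of $\tau_p$ to control $\dim\tau_p$. This detour is valid but unnecessary: the $k$-faces of $\tau_p$ already lie in the closed star of $\tau_p$, so the hypothesis applied to $\tau_p$ itself gives at most $S_k$ of them immediately. One minor imprecision: a simplicial covering need not restrict to a homeomorphism on \emph{closed} stars (consider $S^1$ double-covering itself with the obvious triangulations), though it does on open stars; for the purpose at hand the paper simply takes $S_k$ to be the total number of $k$-simplices in the finite complex $Y$, which is a cruder but sufficient bound.
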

\begin{proof}
We let $\Psi$ denote the de Rham isomorphism, given by integration over chains. For the first statement note that for any $k$-form $\omega$ and any $k$-simplex 
\[ |\Psi(\omega)(\sigma)| = \left|\int_{\sigma} \omega\right| \leq Vol(\sigma, g) ||\omega||_g \leq C_k ||\omega||_g\]
and hence $\Psi(\omega)$ is a bounded simplicial cochain.

Conversely, let $c$ be a bounded simplicial cochain that we write as $c = \sum \lambda_{\sigma} \sigma^*$. Then
\[\Phi(c) =  \sum \lambda_{\sigma} \Phi_{\sigma^*}\]
and the $\lambda_{\sigma}$ are bounded by definition. Moreover, the $\Phi_{\sigma^*}$ are elementary $k$-forms and as the forms $d\mu_i$ are uniformly bounded the same holds for $\Phi_{\sigma^*}$. Note that every point $p \in X$ lies in the interior of a unique simplex $\tau_p$ and $\Phi_{\sigma^*}(p)$ is necessarily zero unless $\sigma$ lies in $st(\tau_p)$. Thus $\Phi_{\sigma^*}(p)$ is non-zero for at most $S_k$ simplices and it follows that $\Phi(c)$ is uniformly bounded.

Finally, if $g_X = p^*g$ is the pullback metric on $X$, then the volumes of simplices are the same as their images in $Y$ and these are uniformly bounded by the assumption that $Y$ is finite. The same holds for the 1-forms $d\mu_i$, since these are locally pullbacks of the corresponding forms on $Y$. Moreover, the star of each simplex in $X$ has at most $S_k$ simplices, where $S_k$ denotes the number $k$-simplices in $Y$ which is finite by assumption, thus proving the final claim.
\end{proof}
We now consider a finite simplicial complex $X$ with a metric $g$ and let $\widetilde{g}$ denote the pullback metric on the universal cover $\widetilde{X} \stackrel{p} \longrightarrow X$. With this notation we have the following definition of hyperbolicity of cohomology classes on finite complexes.

\begin{defn}[Hyperbolicity for cohomology]
A class \linebreak $\alpha \in H^k(X, \mathbb{R})$ is called \emph{hyperbolic} if there exists a de Rham representative $\eta \in \Omega^k(X)$ of $\alpha$, so that the $p^* \eta$ has a bounded primitive with respect to the metric $\widetilde{g}$. 
\end{defn}

By Proposition \ref{bound_deRham} this is equivalent to the statement that the simplicial cochain $p^* \Psi(\eta) = \Psi(p^* \eta) \in \hat{C}^*_{\Delta}(\widetilde{X}, \mathbb{R})$ is exact as a bounded simplicial cochain. Hence it is clear that the definition is independent of the metric and the chosen representative $\eta$. Furthermore, since any continuous map can be approximated by a simplicial map, hyperbolicity is natural under maps between finite complexes. 

More generally, if $Y$ is any topological space, then we make the following definition.
\begin{defn}
Let $Y$ be a topological space. A class $\alpha \in H^k(Y,\mathbb{R})$ is hyperbolic if $f^* \alpha$ is hyperbolic for every continuous map $X \stackrel{f} \longrightarrow Y$ of a finite complex $X$ to $Y$.
\end{defn}
As in the case of bounded cohomology, all hyperbolic classes are trivial if $\pi_1(X)$ is amenable. The proof of Brunnbauer and Kotschick in \cite{BruKot} is geometric and uses certain isoperimetric inequalities. One can however give a more direct proof that follows Gromov's original argument in the bounded case:
\begin{thm}\label{amenable_triv}
Let $X$ be a finite simplicial complex with amenable fundamental group. Then all hyperbolic classes are trivial.
\end{thm}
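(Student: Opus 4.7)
The plan is to adapt Gromov's classical averaging argument for bounded cohomology to the present setting, where boundedness of the primitive is only available on the universal cover $\widetilde{X} \stackrel{p}{\longrightarrow} X$. The left-invariant mean on $\pi_1(X)$ will be used to push the bounded primitive on $\widetilde{X}$ down to an ordinary primitive on $X$, thereby forcing the chosen representative of $\alpha$ to be exact and so $\alpha = 0$ in $H^k(X,\mathbb{R})$.

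First I would translate the assumption into purely simplicial terms. Fix a de Rham representative $\eta \in \Omega^k(X)$ of $\alpha$ such that $p^{*}\eta = d\widetilde{\omega}$ for some bounded $\widetilde{\omega} \in \Omega^{k-1}(\widetilde{X})$. Since $X$ is finite and $\widetilde{X}\to X$ is a covering, Proposition \ref{bound_deRham} applies on both spaces with the pullback metric and barycentric coordinates. Integrating forms over simplices therefore produces a simplicial cocycle $c \in C^{k}_{\Delta}(X,\mathbb{R})$ representing $\alpha$ and a bounded simplicial cochain $\widetilde{b} \in \hat{C}^{k-1}_{\Delta}(\widetilde{X},\mathbb{R})$; naturality of the de Rham map under the covering, together with the fact that it is a chain map, yields $d\widetilde{b} = p^{*}c$.

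Next I would bring in amenability. Choose a left-invariant mean $\mu$ on $G = \pi_{1}(X)$. For each oriented simplex $\sigma$ of $X$, pick a lift $\widetilde{\sigma}$ to $\widetilde{X}$ and define
\[
b(\sigma) \;=\; \mu_{g}\bigl(\widetilde{b}(g^{-1} \widetilde{\sigma})\bigr).
\]
This is well-defined since $g \mapsto \widetilde{b}(g^{-1}\widetilde{\sigma})$ is uniformly bounded. If $\widetilde{\sigma}$ is replaced by another lift $h\widetilde{\sigma}$, the integrand is pre-composed with the left translation $g \mapsto h^{-1}g$, and left-invariance of $\mu$ guarantees independence of the choice. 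Using that $G$ acts simplicially (so $d$ is $G$-equivariant) and that $p^{*}c$ is $G$-invariant, one computes
\[
db(\sigma) \;=\; \mu_{g}\bigl(d\widetilde{b}(g^{-1}\widetilde{\sigma})\bigr) \;=\; \mu_{g}\bigl(p^{*}c(g^{-1}\widetilde{\sigma})\bigr) \;=\; \mu_{g}\bigl(c(\sigma)\bigr) \;=\; c(\sigma),
\]
so $c = db$ is exact and hence $\alpha = [c] = 0$.

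I expect the main conceptual work to lie in the first step rather than in the averaging itself: one must check that Proposition \ref{bound_deRham} really does apply to $\widetilde{X}$ with its pullback structure (uniformly bounded simplex volumes, uniformly bounded barycentric forms, and a uniform bound on the number of simplices in each star), and that the de Rham map intertwines $(\eta,\widetilde{\omega})$ with a pair $(c,\widetilde{b})$ in such a way that the identity $d\widetilde{b}=p^{*}c$ is preserved with $\widetilde{b}$ still bounded. Once those compatibilities are in hand the averaging is a direct translation of Gromov's classical argument and presents no further difficulty.
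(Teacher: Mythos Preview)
Your proposal is correct and follows essentially the same approach as the paper: both translate hyperbolicity into the exactness of $p^{*}c$ in bounded simplicial cochains via Proposition~\ref{bound_deRham}, and then average the bounded primitive over $G=\pi_1(X)$ using a left-invariant mean to produce a primitive on $X$. The only cosmetic difference is that the paper phrases the averaging as an operator $A$ on bounded cochains of $\widetilde{X}$ landing in $G$-equivariant cochains (which then descend), whereas you define the averaged cochain directly on $X$ by choosing lifts; these are equivalent formulations of the same argument.
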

\begin{proof}
We let $\widetilde{X} \stackrel{p} \longrightarrow X$ denote the universal cover and let $G = \pi_1(X)$. Then $G$ acts on $\widetilde{X}$ by (simplicial) deck transformations that we denote by $T_g$. Now assume that $\alpha \in H^k(X, \mathbb{R})$ is a hyperbolic class. By Proposition \ref{bound_deRham} this means that for any simplicial representative $a$ of $\alpha$ the cochain $p^* a \in \hat{C}^*_{\Delta}(\widetilde{X}, \mathbb{R})$ is exact. We let $b \in \hat{C}^{k - 1}_{\Delta}(\widetilde{X}, \mathbb{R})$ be a primitive. Then since $G$ is amenable there is an averaging operator on bounded \emph{singular} chains
\[ C_b^k(\widetilde{X}, \mathbb{R}) \stackrel{A} \longrightarrow C_G^k(\widetilde{X}, \mathbb{R})\]
that maps an arbitrary bounded cochain to a $G$-equivariant one. This map is defined as follows: let $\mu: L^{\infty}(G) \to \mathbb{R}$ be a left-invariant mean, which exists since $G$ is amenable. Let $c \in C_b^k(\widetilde{X}, \mathbb{R})$ and let $\sigma$ be any $k$-simplex, we define a function $\phi_{c, \sigma}: G \to \mathbb{R}$ by
\[\phi_{c, \sigma}(g) = c((T_{g^{-1}})_* \thinspace \sigma).\]
We then set 
\[A(c)(\sigma) = \mu(\phi_{c, \sigma}).\]
Since $\mu$ was left invariant $A(c)$ is a $G$-equivariant cochain on $\widetilde{X}$, that is $A(c) = p^* c'$ for a unique cochain in $C^k(X, \mathbb{R})$. One also checks that $A$ is a chain map. Finally, as the deck transformations are simplicial $A$ induces a well-defined map on bounded simplicial cochains. If we let $b' \in C_{\Delta}(X, \mathbb{R})$ be such that $A(b) = p^* b'$ we compute
\[p^* \delta b' =  \delta p^* b'= \delta A(b) = A(p^* a) = p^* a.\]
Thus $\delta b' = a$ since $p^*$ is injective and the class $\alpha \in H^k(X, \mathbb{R})$ is trivial.
\end{proof}
In order to show that the MMM-classes are hyperbolic, we shall need two technical lemmata, the first of which is in essence Theorem 2.1 in \cite{Ked}. In \cite{Ked} K\c{e}dra considered only the universal cover of a manifold, however our assumption that $p^* \alpha$ is exact in bounded cohomology ensures that his proof goes through.
\begin{lem}\label{Kedra_bounded}
Let $\bar{X} \stackrel{p} \longrightarrow X$ be a covering of simplicial complexes, with $X$ finite. Let $\alpha \in H_b^k(X, \mathbb{R})$ be a bounded cohomology class such that $p^* \alpha$ is trivial in $H_b^k(\bar{X}, \mathbb{R})$. Then there is a de Rham representative $\Phi_{\alpha}$ of $\alpha$ and a bounded $(k-1)$-form $\Phi_{\beta}$ with 
\[d \Phi_{\beta} = p^* \Phi_{\alpha}.\]
\end{lem}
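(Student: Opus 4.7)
The plan is to transport the bounded simplicial primitive provided by the hypothesis into the de Rham world via the explicit inverse $\Phi$ of the simplicial de Rham isomorphism constructed in the excerpt. Proposition \ref{bound_deRham}, and in particular its last sentence about coverings of finite complexes, is designed precisely for this purpose.

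First I would pick a bounded simplicial representative $a \in \hat{C}^k_\Delta(X, \mathbb{R})$ of $\alpha$. Since $p^*\alpha$ is trivial in $H^k_b(\bar{X}, \mathbb{R})$, there is a bounded $(k-1)$-cochain $b \in \hat{C}^{k-1}_\Delta(\bar{X}, \mathbb{R})$ with $\delta b = p^* a$. Next I would define
\[\Phi_\alpha = \Phi(a) \in \Omega^k(X), \qquad \Phi_\beta = \Phi(b) \in \Omega^{k-1}(\bar{X}).\]
Fix any metric $g$ on $X$ (which is finite, hence automatically satisfies the hypotheses of Proposition \ref{bound_deRham}), and endow $\bar{X}$ with the pullback metric $\bar{g} = p^*g$. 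Then Proposition \ref{bound_deRham} applied to $X$ shows $\Phi_\alpha$ is bounded, and the final sentence of Proposition \ref{bound_deRham} shows that the hypotheses for the inverse direction are met for the covering $p\co \bar{X}\to X$, so $\Phi_\beta$ is bounded as well.

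It remains to verify the identity $d\Phi_\beta = p^*\Phi_\alpha$. For this I would use two standard properties of the Whitney inverse $\Phi$: that it is a cochain map ($d \circ \Phi = \Phi \circ \delta$) and that it is natural with respect to simplicial maps, in particular under the simplicial covering $p$ (so $\Phi \circ p^* = p^* \circ \Phi$). Both follow directly from the explicit formula in terms of barycentric coordinates: pullback commutes with the $\mu_i$ and with exterior derivative, while $\Phi$ being a chain map is exactly the assertion that $\Phi \circ \Psi$ and $\Psi \circ \Phi$ are chain homotopic to the identity and hence $\Phi$ realises the inverse at the level of cochain complexes. Combining these,
\[d\Phi_\beta = d\Phi(b) = \Phi(\delta b) = \Phi(p^* a) = p^*\Phi(a) = p^*\Phi_\alpha.\]
Finally, $\Phi_\alpha$ represents $\alpha$ in de Rham cohomology because $\Psi[\Phi_\alpha] = [a] = \alpha$.

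The main obstacle is really just the bookkeeping in the second paragraph: one must be confident that the non-compactness of $\bar{X}$ does not destroy uniform boundedness of $\Phi_\beta$. This is exactly the content of the last assertion in Proposition \ref{bound_deRham}, whose proof uses that under a simplicial covering of a finite complex the volumes of simplices, the $1$-forms $d\mu_i$, and the number of simplices in each star all lift to uniformly bounded quantities. Once this is granted, the argument reduces to the formal fact that $\Phi$ is a natural chain map.
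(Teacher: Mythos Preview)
Your approach is essentially the paper's: restrict to simplicial cochains, apply the Whitney map $\Phi$, and invoke Proposition~\ref{bound_deRham} for boundedness on the cover. The one step you gloss over is the passage from the hypothesis---triviality of $p^*\alpha$ in bounded \emph{singular} cohomology---to the existence of a bounded \emph{simplicial} primitive $b\in\hat{C}^{k-1}_\Delta(\bar X,\mathbb{R})$ of $p^*a$; the paper handles this by taking a bounded singular primitive $\beta$ and restricting it to the simplicial subcomplex, which is a bounded chain map. (Also, your justification that $\Phi$ is a chain map via chain homotopies is not quite right---chain homotopy equivalence does not by itself force $\Phi$ to commute with differentials---but $d\circ\Phi=\Phi\circ\delta$ is a direct computation from the elementary-form formula, and the paper uses it without comment as well.)
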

\begin{proof}
We first lift the simplices of $X$ to $\bar{X}$ and let $\bar{g}$ be the lifted metric. Now let $\beta$ be a bounded singular $(k-1)$-cochain on $\bar{X}$ so that $\delta \beta = p^* \alpha$. By restricting to the simplicial cochain complex, we obtain a simplicial cochain
\[\beta_s = \sum \lambda_{\bar{\sigma}} \bar{\sigma}\]
where the $\lambda_{\bar{\sigma}}$ are bounded and $\delta \beta_s = p^*\alpha_s$ as simplicial cochains. Applying the inverse of the de Rham isomorphism to $\alpha_s, \beta_s$ we obtain forms $\Phi_{\alpha}, \Phi_{\beta}$ such that $d = p^* \Phi_{\alpha}$ and by Proposition \ref{bound_deRham} the form $\Phi_{\beta} $ is bounded.
\end{proof}
The next lemma gives sufficient conditions under which integration along the fiber maps bounded forms to bounded forms.
\begin{lem}\label{bounded_int}
Let $F \longrightarrow E \stackrel{\pi} \longrightarrow M$ be a smooth fiber bundle over a manifold $M$, whose fiber is a closed manifold of dimension $m$. Let $g_M$ be a metric on $M$ and $g_E$ a submersion metric on $E$. Let $\Omega_v$ denote the fiberwise volume form induced by $g_E$. If $\pi_{!}\Omega_v$ is bounded, then the map
\[ \pi_{!} : \Omega^{k + m}(E) \to  \Omega^{k}(M)\]
maps bounded forms to bounded forms.
\end{lem}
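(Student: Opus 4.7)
The plan is to obtain the bound pointwise on $M$: for each $x \in M$ and each orthonormal $k$-frame $u_1, \ldots, u_k$ in $T_xM$, I will estimate $(\pi_!\omega)_x(u_1, \ldots, u_k)$ by combining a pointwise bound coming from $||\omega||_{g_E}$ with a fiber-volume factor controlled by $\pi_!\Omega_v$. The essential ingredient is the submersion metric hypothesis, which is exactly what makes horizontal and vertical orthonormal frames combine into an orthonormal frame of $TE$.

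First, I would unpack the submersion metric: it yields an orthogonal splitting $TE = T^vE \oplus T^hE$ with $T^vE = \ker d\pi$, and $d\pi$ restricts to an isometry $T^h_yE \to T_{\pi(y)}M$. An immediate consequence is that if $u_1, \ldots, u_k$ is an orthonormal frame of $T_xM$ and $f_1, \ldots, f_m$ is an orthonormal frame of $T^v_yE$ at some $y \in F_x = \pi^{-1}(x)$, then the horizontal lifts $\tilde u_1, \ldots, \tilde u_k$ together with $f_1, \ldots, f_m$ form an orthonormal frame of $T_yE$.

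Next, I would invoke the standard pointwise formula for fiber integration,
\[(\pi_!\omega)_x(u_1, \ldots, u_k) = \int_{F_x} \iota_{\tilde u_k} \cdots \iota_{\tilde u_1}\omega.\]
The restriction of the integrand to the fiber, evaluated on the vertical orthonormal frame $f_1, \ldots, f_m$, equals $\omega_y(\tilde u_1, \ldots, \tilde u_k, f_1, \ldots, f_m)$, which by the orthonormality observation above is bounded in absolute value by $||\omega||_{g_E}$. In terms of top forms on the fiber this gives the pointwise inequality $\left|\iota_{\tilde u_k} \cdots \iota_{\tilde u_1}\omega\right|_y \leq ||\omega||_{g_E}\cdot \Omega_v|_y$.

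Integrating this inequality over $F_x$ and using that $(\pi_!\Omega_v)_x$ is by definition the Riemannian volume of the fiber in the submersion metric, one arrives at
\[|(\pi_!\omega)_x(u_1, \ldots, u_k)| \leq ||\omega||_{g_E} \cdot (\pi_!\Omega_v)_x,\]
so that $||\pi_!\omega||_{g_M} \leq ||\omega||_{g_E} \cdot ||\pi_!\Omega_v||_\infty$. I do not foresee any serious obstacle: the submersion metric is tailored to make the interior-product estimate above work, and the passage from the pointwise inequality on the fiber to the required global bound on $M$ follows immediately from the hypothesis that $\pi_!\Omega_v$ is bounded.
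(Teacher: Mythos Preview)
Your proposal is correct and follows essentially the same approach as the paper. Both arguments hinge on the same observation: because $g_E$ is a submersion metric, horizontal lifts of an orthonormal frame on $M$ together with a vertical orthonormal frame give an orthonormal frame on $E$, so evaluating the form on such a frame is bounded by $\|\omega\|_{g_E}$; integrating the remaining vertical factor then yields the $\pi_!\Omega_v$ bound. The paper phrases this via a local trivialisation and an explicit decomposition $\phi=\sum_I (f_I\,\Omega_v)\wedge \pi^*e^I+\psi$, whereas you use the interior-product formula for $\pi_!$ directly, but the content is the same.
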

\begin{proof}
Let $\phi$ be a bounded $(k +m)$-form with respect to the metric $g_E$. Let $U$ be an open neighbourhood of $p \in B$ and choose a trivialisation $V = \pi^{-1}(U) \cong U \times F$. We let $\{ e_1,...,e_n \}$ be a local orthonormal frame on $U$ with respect to $g_B$ and $\{ e^1,..,e^n \}$ its dual. On $V$ we may decompose $\phi$ as
\[ \phi = \sum_I (f_I \Omega_v) \wedge (\pi^* e^I) + \psi\]
where $I$ is a multi-index of length $k$ and $\psi$ vanishes on $k$-tuples of vertical vectors. Choose a local orthonormal frame  $f_1, ... f_k$ about a point $(p,x)$ in $V$ and let $\bar{e}_1,...\bar{e}_n$ be a lift of this frame to $E$ that is guaranteed by the assumption that $g_E$ is a submersion metric. Then $\{f_1,...f_k, \bar{e}_1,...\bar{e}_n \}$ is an orthonormal frame and hence as $\phi$ is bounded
\[|\phi( f_1,...f_k, \bar{e}_1,...\bar{e}_n)(p,x)| = |f_I(p,x)| < C.\]
Thus we conclude locally
\[\pi_{!} \phi(p) = \sum_I \int_{F_p} (f_I\Omega_v) e^I\]
and
\[\left|\int_{F_p} f_I \Omega_v\right| \leq C \left|\pi_{!} \Omega_v(p)\right|\]
so $\pi_{!} \phi$ is bounded if $\pi_{!} \Omega_v$ is.
\end{proof}
With the aid of these results we will prove the hyperbolicity of the MMM-classes.
\begin{thm}\label{MMM_are_hyperbolic}
The MMM-classes are hyperbolic.
\end{thm}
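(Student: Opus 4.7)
Given a continuous map $f\co X\to B\Gamma_h$ from a finite simplicial complex $X$ classifying a surface bundle $\pi\co E\to X$, the goal is to produce a bounded de Rham primitive on the universal cover $\tilde X$ for the pullback of a de Rham representative of $e_k(E)$. Let $p_X\co\tilde X\to X$ be the universal cover and $\bar E := p_X^*E$ the pullback bundle, with projections $\bar\pi\co\bar E\to\tilde X$ and $q\co\bar E\to E$. By naturality of fibre integration $p_X^*e_k(E) = \bar\pi_!\,e(\bar E)^{k+1}$, so the strategy is to construct a bounded primitive on $\bar E$ of a de Rham representative of $e(\bar E)^{k+1}$ and then push it down by fibrewise integration to $\tilde X$.

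The crucial step is to show that $e(\bar E)^{k+1}$ vanishes in $H_b^{2k+2}(\bar E,\R)$ for $k\geq 1$. By Proposition \ref{Euler_bounded}, the class $e(E)$ is bounded, hence so are $e(E)^{k+1}$ and its pullback $q^*e(E)^{k+1}=e(\bar E)^{k+1}$. Consider the fibre inclusion $\iota\co\Sigma_h\hookrightarrow\bar E$. The long exact homotopy sequence of the fibration $\bar E\to\tilde X$, together with $\pi_1(\tilde X)=1$, shows that $\iota$ induces a surjection $\pi_1(\Sigma_h)\twoheadrightarrow\pi_1(\bar E)$ whose kernel equals the image of the boundary map $\partial\co\pi_2(\tilde X)\to\pi_1(\Sigma_h)$. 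This image is an abelian subgroup of the surface group $\pi_1(\Sigma_h)$ (with $h\geq 2$), and is therefore cyclic and in particular amenable. Gromov's mapping theorem then yields that $\iota^*\co H_b^*(\bar E,\R)\to H_b^*(\Sigma_h,\R)$ is an isomorphism, and since $\iota^*e(\bar E)^{k+1}=e(T\Sigma_h)^{k+1}$ lives in the trivial group $H^{2k+2}(\Sigma_h,\R)=0$ for $k\geq 1$, the required vanishing in bounded cohomology follows.

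Now Lemma \ref{Kedra_bounded} applied to the covering $q\co\bar E\to E$ and the bounded class $\alpha=e(E)^{k+1}$ delivers a de Rham representative $\Phi_\alpha\in\Omega^{2k+2}(E)$ together with a bounded $(2k+1)$-form $\Phi_\beta$ on $\bar E$ satisfying $d\Phi_\beta=q^*\Phi_\alpha$. Fix a submersion metric on the smooth bundle $E\to X$; since the base $X$ is compact the fibrewise volume form pushes forward to a bounded function on $X$. Lifting this metric to $\bar E$ via $q$ preserves this boundedness, so Lemma \ref{bounded_int} certifies that $\bar\pi_!\Phi_\beta$ is a bounded $(2k-1)$-form on $\tilde X$. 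Naturality of fibre integration in the pullback square then yields
\[ d(\bar\pi_!\Phi_\beta) \;=\; \bar\pi_!(q^*\Phi_\alpha) \;=\; p_X^*(\pi_!\Phi_\alpha), \]
and $\pi_!\Phi_\alpha$ is a de Rham representative of $e_k(E)$ on $X$. Thus $p_X^*$ of this representative admits the bounded primitive $\bar\pi_!\Phi_\beta$ on $\tilde X$, exhibiting $f^*e_k$ as a hyperbolic class.

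The main obstacle is the bounded-cohomology vanishing carried out in the second paragraph. Once that is available, the third paragraph is a fairly mechanical chaining of Lemmas \ref{Kedra_bounded} and \ref{bounded_int} using naturality of fibre integration. The delicate algebraic input is the amenability of $\ker(\pi_1(\Sigma_h)\to\pi_1(\bar E))$, which in turn hinges on the specific fact that every abelian subgroup of a closed hyperbolic surface group is cyclic; without this one cannot apply Gromov's mapping theorem to reduce the bounded class $e(\bar E)^{k+1}$ to its restriction on a single fibre, where it vanishes trivially for dimensional reasons.
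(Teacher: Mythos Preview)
Your overall architecture matches the paper's: pull the bundle back to the universal cover, show the bounded class $e^{k+1}$ becomes trivial there, convert this via Lemma~\ref{Kedra_bounded} into a bounded de Rham primitive on $\bar E$, and push down with Lemma~\ref{bounded_int}. The mechanical third paragraph is fine and essentially identical to the paper's computation.

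The genuine gap is in your second paragraph. You conclude that $\iota^*e(\bar E)^{k+1}$ vanishes in $H_b^{2k+2}(\Sigma_h,\R)$ because it ``lives in the trivial group $H^{2k+2}(\Sigma_h,\R)=0$''. But that is the \emph{ordinary} cohomology group, and the comparison map $H_b^*(\Sigma_h,\R)\to H^*(\Sigma_h,\R)$ is far from injective (for instance $H_b^3(\Sigma_h,\R)$ is infinite-dimensional while $H^3(\Sigma_h,\R)=0$). Vanishing for dimensional reasons in ordinary cohomology says nothing about vanishing in bounded cohomology. The paper closes this gap by invoking Morita's theorem (\cite{Mor3}, Section~6) that $\iota^*e^2=0$ in $H_b^4(\Sigma_h,\R)$; this is a substantive result, not a formality, and once one has it the vanishing of $\iota^*e^{k+1}$ follows by taking cup products with the bounded class $e^{k-1}$. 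You need to cite or reprove that input.

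Two smaller points. First, your detour through the amenable kernel of $\pi_1(\Sigma_h)\to\pi_1(\bar E)$ is unnecessary: for $h\ge 2$ the bundle is flat (the classifying space $B\Gamma_h$ is aspherical), so the boundary map $\pi_2(\tilde X)\to\pi_1(\Sigma_h)$ is already zero and $\iota_*$ is an isomorphism on $\pi_1$, exactly as the paper uses. Second, Lemma~\ref{bounded_int} is stated for smooth fibre bundles over manifolds, and you apply it directly over the simplicial complex $X$; the paper handles this by first treating the case where the base is a compact manifold and then, for a general finite complex $X$, embedding $X$ in $\R^N$ and passing to a regular neighbourhood. You should include that reduction.
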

\begin{proof}
We let $e$ denote the vertical Euler class and we also let $X \stackrel{f} \longrightarrow B \Gamma_h$ be the classifying map of a bundle $E$ over a finite simplicial complex $X$. We first assume that $X = B$ is a smooth, compact manifold (possibly with boundary). We let $\widetilde{E} $ be the pullback bundle over the universal cover of $B$
\[\xymatrix{\widetilde{E} \ar[r]^{p} \ar[d]_{\pi} & E \ar[d]^{\pi} \ar[r] & E \Gamma_h \ar[d]\\
\widetilde{B} \ar[r]^{p} & B \ar[r]^f & B \Gamma_h }\]
and further let $\Sigma_h \stackrel{\iota} \longrightarrow \widetilde{E}$ be the inclusion of a fiber. Morita has shown that $\iota^* e^2$ is trivial in \emph{bounded cohomology} (cf.\ \cite{Mor3}, Section 6). Thus the same holds for $\iota^* e^{k +1}$. Moreover, since $\pi_1(\widetilde{E}) \cong \pi_1(\Sigma_g)$ this inclusion induces an isomorphism on bounded cohomology. We may thus choose a bounded chain $b_k \in C^{2k -1}_b(\widetilde{E}, \mathbb{R})$ with $p^* e^{k +1} = \delta b_k$. 

Then by Lemma \ref{Kedra_bounded} there is a form $\Phi_k$ on $\widetilde{E}$ which is bounded with respect to the pullback metric and a form $\Psi_{k+1}$ on $E$ that is a representative of $e^{k+1}$ so that $p^* \Psi_{k +1} = d \Phi_k$. Since integration along the fiber is natural and commutes with the exterior derivative, we compute
\begin{align*} 
p^* e_k & = p^* \pi_{!} \Psi_{k+1} = \pi_{!} p^* \Psi_{k+1}\\
& = \pi_{!} d \Phi_k = d (\pi_{!} \Phi_k)
\end{align*}
We finally need to check that $\pi_{!} \Phi_k$ is bounded with respect to the pullback metric on $\widetilde{B}$. Let $g_B$ be any metric on the base and let $g_E$ be a submersion metric on $E$. The pullback metric $\widetilde{g} = p^* g_E$ is a submersion metric for $g_{\widetilde{B}} = p^* g_B$ and the vertical volume form $\widetilde{\Omega}_v$ on $\widetilde{E}$ is the pullback of the vertical volume form $\Omega_v$ on $E$ and thus
\[\pi_{!} \widetilde{\Omega}_v = \pi_{!} p ^*  \Omega_v = p ^*  \pi_{!} \Omega_v\]
is a pullback of a form on $B$ and is hence bounded. Now $\phi_k$ is bounded with respect to the metric $\widetilde{g}$ and thus by Lemma \ref{bounded_int} it follows that $\pi_{!} \Phi_k$ is bounded. Hence $e_k \in H^{2k}(B)$ is hyperbolic.

In the general case let $X$ be an arbitrary finite simplicial complex. We may embed $X$ in $\mathbb{R}^N$ for some sufficiently large $N$. We then let $B = \nu(X)$ be a (compact) regular neighbourhood of $X$ in $\mathbb{R}^N$. Since $\nu(X)$ deformation retracts onto $X$ we have the following commutative diagram
\[\xymatrix{ X \ar[r] \ar[dr]_f & \nu(X) \ar[d]^{\bar{f}} \\
& B \Gamma_h.}\]
Then by the argument above, $\bar{f}^* e_k$ is hyperbolic and hence by naturality so is $f^* e_k$.
\end{proof}
In general, the notion of hyperbolicity is strictly weaker than that of boundedness - the reason being that hyperbolicity is preserved under cup products so that the space of hyperbolic cohomology classes forms an ideal, whereas the space of bounded classes is only a subring. In particular the dual of the fundamental class of any product $T^n \times M$ is hyperbolic if $||M|| \neq 0$, but $[T^k \times M]^* \in H^{k+n}(T^k \times M) $ is not bounded. However, in the case of classes of degree two it is still open as to whether hyperbolicity implies boundedness. In terms of group cohomology the hyperbolicity condition seems to be closely related, or even equivalent to the notion of weak boundedness considered in \cite{NR}. This might provide a better basis for finding examples of hyperbolic classes of degree $2$ that are not bounded.


\begin{thebibliography}{99}
  
\bibitem{BLM} J. Birman, A. Lubotzky and J. McCarthy,
\textit{Abelian and Solvable Subgroups of Mapping Class Groups},
Duke Math. J. \textbf{50} (1983),  no. 4, 1107--1120.


\bibitem{Bow} J. Bowden,
\emph{Closed leaves of foliations, multisections and stable commutator lengths},
J. Topol. Anal. \textbf{4} (2011), no. 3, 491--509.

\bibitem{BruKot} M. Brunnbauer and D. Kotschick,
\emph{On hyperbolic cohomology classes},
preprint 2009, arXiv:0808.1482v1.

\bibitem{EE} C. J. Earle and J. Eells,
\emph{The diffeomorphism group of a compact Riemann surface},
Bull. Amer. Math. Soc. \textbf{73} (1967), 557--559.

%\bibitem{Ger} S. Gersten,
%\emph{Bounded cohomology and combings of groups}, unpublished preprint,

\bibitem{Gro} M. Gromov,
\emph{Volume and bounded cohomology},
Inst. Hautes \'Etudes Sci. Publ. Math. \textbf{56} (1982), 5--99.

\bibitem{Gro2} M. Gromov, 
\emph{K\"ahler hyperbolicity and $L^2$-Hodge theory},
J. Differential Geometry \textbf{33} (1991), 263--292.

\bibitem{Iva} N. Ivanov,
Mapping class groups. \emph{Handbook of geometric topology}, 523--633, North-Holland, Amsterdam, 2002.
\bibitem{Iva2} N. Ivanov,
\emph{Subgroups of Teichm\"uller Modular Groups},
Translations of Mathematical Monographs \textbf{115}, AMS, 1992.

\bibitem{Ked} J. K\c{e}dra,
\emph{Symplectically hyperbolic manifolds},
Differential Topol. Appl. \textbf{27} (2009), no. 4, 455--463.

\bibitem{KL} D. Kotschick and C. L\"{o}h,
\emph{Fundamental classes not representable by products},
J. Lond. Math. Soc. (2) \textbf{79} (2009), no. 3, 545--561.

\bibitem{MW} I. Madsen and M. Weiss,
 \emph{The stable moduli space of Riemann surfaces: Mumford's conjecture},
 Ann. of Math. (2) \textbf{165} (2007), no. 3, 843--941.

\bibitem{Mor} S. Morita,
\emph{Characteristic classes of surface bundles},
Invent. Math. \textbf{90} (1987), 551--577.

\bibitem{Mor3} S. Morita, \emph{Characteristic classes of surface bundles and bounded cohomology},
A F\^{e}te of Topology, Academic Press, 1988, 233--258.

\bibitem{NR} W. Neumann and L. Reeves,
\emph{Regular cocycles and biautomatic structures},
Internat. J. Algebra Comput. \textbf{6} (1996), no. 3, 313--324. 

\bibitem{Swan} R. G. Swan,
\emph{Thom's theory of differential forms on simplicial sets},
Topology \textbf{14} (1975), 271--273.

\bibitem{Whit} H. Whitney,
\emph{Geometric Integration Theory},
Princeton University Press, 1957.



\end{thebibliography}
\end{document}